\newcommand\Pp{\mathbb P}
\newcommand{\cal}{\mathcal}
\newcommand\oo{\mathcal{O}}
\DeclareMathOperator{\bl}{B\ell}
\DeclareMathOperator{\rk}{rk}
\DeclareMathOperator{\jac}{Jac}
\DeclareMathOperator{\Z}{\mathbb Z}
\DeclareMathOperator{\Gr}{Gr}
\title{Derived categories of Gushel--Mukai surfaces and Fano fourfolds of K3 type}
\newtheorem{theorem}{Theorem}
\numberwithin{theorem}{section} 
\newtheorem{lemma}[theorem]{Lemma}
\newtheorem{corollary}[theorem]{Corollary}
\newtheorem{proposition}[theorem]{Proposition}
\theoremstyle{definition}
\newtheorem{definition}[theorem]{Definition}
\theoremstyle{remark}
\newtheorem{remark}[theorem]{Remark}
\theoremstyle{theorem}
\newtheorem{introthm}{Theorem}
\author[Y. Prieto--Monta\~{n}ez]{Yulieth Prieto--Monta\~{n}ez} %
\address{Pontificia Universidad Católica de Chile, Campus San Joaquín, Avenida Vicuña Mackenna 4860, Santiago de Chile, Chile} %
\email{yulieth.prieto@uc.cl}
\author{Ian Selvaggi} %
\address{Scuola Internazionale Superiore di Studi Avanzati (SISSA), Trieste, Italy} %
\email{iselvagg@sissa.it}
\date{\today}
\begin{document}
\begin{abstract}
    We prove that very general, dual Gushel--Mukai surfaces are not isomorphic, though derived and $L$-equivalent. We use this result to study two semiorthogonal decompositions for a family of Fano fourfolds of K3 type, answering a question by Bernardara--Fatighenti--Manivel--Tanturri.
\end{abstract}
\maketitle
\setcounter{tocdepth}{1}
\section{Introduction}
This article studies the derived categories of some Gushel--Mukai surfaces, building on work by Kuznetsov and Perry \cite{kp2023}, and provides a short account of two families of K3 surfaces, whose members are related by a Fourier--Mukai equivalence but are not isomorphic. As an application, we focus on a family of Fano fourfolds of K3 type defined in \cite{BerFatManTan21}. For a general element in this family, we relate two semiorthogonal decompositions of its bounded derived category by means of homological projective duality.
\subsection{Background}
We start with a recap of some aspects of Gushel--Mukai varieties. For a deeper discussion we refer the reader to \cite{debarre2018gushel}.

Let $V_5$ be a five-dimensional $k$-vector space.
\begin{definition}
    A \emph{Gushel--Mukai \textup{(}GM\textup{)} variety} is a smooth $n$-dimensional intersection
\begin{equation*}
X = \mathsf{C}(\Gr(2,V_5)) \cap \Pp^{n+4} \cap Q,
\qquad 
2 \le n \le 6,
\end{equation*}
where $\mathsf{C}(\Gr(2,V_5)) \subset \Pp(\bigwedge^2 V_5\oplus k)$ is the cone over the Grassmannian $\Gr(2,V_5) \subset \Pp(\bigwedge^2 V_5)$ 
in its Pl\"ucker embedding, $\Pp^{n+4} \subset \Pp(\bigwedge^2 V_5\oplus k)$ is a linear subspace, and 
$Q \subset \Pp^{n+4}$ is a quadric hypersurface.
\end{definition}

\begin{remark}
    This definition is slightly less general than the usual one in \cite[Definition 2.1]{debarre2018gushel}, but it is more suited for the purpose of this paper. 
\end{remark}

Note that, in dimension two, Gushel–Mukai surfaces are K3 surfaces, while in other dimensions they are Fano varieties. GM varieties have been extensively studied due to their rich geometry, homological properties (e.g. \cite{kuznetsov2018derived}), and for their connections to Hyperk\"ahler manifolds. In fact, to any GM variety is attached in a canonical way a sextic hypersurface in $\Pp^5$ called a EPW sextic and a corresponding double cover which is a Hyperk\"ahler fourfold. Such a relation was discovered first by O'Grady in \cite{o2006irreducible}, \cite{ogepw}, and further extended by Iliev--Manivel \cite{iliev2011fano}. We briefly recall some of these results. \\

Let $V_6$ be a six-dimensional vector space and let $\bigwedge^3 V_6$ be its exterior power. Consider the natural symplectic form on $\bigwedge^3 V_6$ given by wedge product, and let $A\subset\bigwedge^3 V_6$ be a Lagrangian subspace. For $k\in\Z_{\geq 0}$, the space $\Pp(V_6)$ inherits a natural stratification given by $$Y_A^{\geq k}:=\{v\in \Pp(V_6)\,|\,\dim(A\cap(v\wedge(\bigwedge^2 V_6)))\geq k\}.$$ If $A$ has no decomposable vectors, meaning that $\Pp(A)\cap\Gr(3, V_6)=\emptyset$, then
\begin{itemize}
    \item $Y_A^{\geq 1}$ is a normal irreducible sextic hypersurface, called an EPW sextic,
    \item $Y_A^{\geq 2}$ is a normal irreducible surface,
    \item $Y_A^{\geq 3}$ is finite and reduced, and empty for general $A$,
    \item $Y_A^{\geq 4}=\emptyset$.
\end{itemize}
For a lagrangian subspace $A\subset\bigwedge^3 V_6$, the orthogonal $A^\perp:=\ker(\bigwedge^3 V_6^\vee\rightarrow A^\vee)\subset\bigwedge^3 V_6^\vee$ is again lagrangian and has no decomposable vectors if and only if $A$ does. So, $A^\perp$ induces a dual sequence of closed subvarieties and the loci for $A^\perp$ can be described in terms of $A$ as $$Y_{A^\perp}^{\geq k}:=\{V_5\in \Pp(V_6^\vee)|\,\dim(A\cap\bigwedge^3 V_5)\geq k\}.$$
Among the results of \cite{debarre2018gushel}, a full classification of all isomorphism classes of GM varieties is given. For a Gushel--Mukai variety $X$, there are natural vector spaces associated to it: the six-dimensional vector space $V_6(X)$ of quadrics in $\Pp^{n+4}$ containing $X$, a five dimensional hyperplane $V_5(X)\subset V_6(X)$, and a lagrangian subspace $A(X)\subset\bigwedge^3 V_6(X)$. More specifically, we have the following correspondence.
\begin{theorem}{\cite[Theorem 3.10]{debarre2018gushel}}\label{thmDK}
    For any $n\geq 2$ there are bijections between:
    \begin{itemize}
        \item[(a)] the set of isomorphism classes of GM varieties $X$ of dimension $n\geq 2$ such that $\mathsf C(\Gr(2,V_5))\cap\Pp^{n+4}$ is smooth,
        \item[(b)] the set of pairs $(A,\mathfrak{p})$, where $A\subset\bigwedge^3 V_6$ is a lagrangian subspace with no decomposable vectors and $\mathfrak{p}\in Y_{A^\perp}^{5-n}$, up to the action of $\operatorname{PGL}(V_6)$.
    \end{itemize}
\end{theorem}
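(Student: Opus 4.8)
This is the classification theorem of Debarre--Kuznetsov, and the natural approach is to construct explicit, mutually inverse maps between the two sets and to check that they intertwine isomorphisms of GM varieties with the $\operatorname{PGL}(V_6)$-action. Concretely: to a GM variety one attaches its \emph{Lagrangian data} $(V_6(X),V_5(X),A(X))$, and conversely one reconstructs $X$ from such data.

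For the map $\mathrm{(a)}\to\mathrm{(b)}$, given $X=\mathsf C(\Gr(2,V_5))\cap\Pp^{n+4}\cap Q$ I set $V_6(X):=H^0(\Pp^{n+4},\mathcal I_X(2))$. The first step is $\dim V_6(X)=6$: the Grassmannian hull $M_X:=\mathsf C(\Gr(2,V_5))\cap\Pp^{n+4}$ is cut out in $\Pp^{n+4}$ by the restrictions of the five Pl\"ucker quadrics of $\Gr(2,V_5)$, and the smoothness hypothesis on $M_X$ (equivalently, enough generality of $\Pp^{n+4}$) keeps these independent, so the quadrics through $M_X$ form a hyperplane $V_5(X)\subset V_6(X)$ with $V_6(X)=V_5(X)\oplus\langle[Q]\rangle$. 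Next I construct $A(X)\subset\bigwedge^3 V_6(X)$ following O'Grady: equip $\bigwedge^3 V_6(X)$ with the wedge symplectic form via a trivialization of $\bigwedge^6 V_6(X)$, use the splitting $\bigwedge^3 V_6(X)\cong\bigwedge^3 V_5(X)\oplus\bigwedge^2 V_5(X)$ induced by $V_5(X)$, and extract $A(X)$ from the equations of $X$ --- its intersection with $\bigwedge^3 V_5(X)$ encoding (by duality with the linear forms on $\Pp(\bigwedge^2 V_5\oplus k)$) the hyperplanes cutting out $\Pp^{n+4}$, up to the cone-point direction, and the remainder encoding $Q$ modulo the Pl\"ucker quadrics. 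Then one verifies: $A(X)$ is isotropic of dimension $10$, hence Lagrangian; $\Pp(A(X))\cap\Gr(3,V_6)=\emptyset$ is equivalent to smoothness of $M_X$; and the point $\mathfrak p:=[V_5(X)]\in\Pp(V_6(X)^\vee)$ satisfies $\dim\!\big(A(X)\cap\bigwedge^3 V_5(X)\big)=5-n$, i.e.\ $\mathfrak p\in Y_{A^\perp}^{5-n}$, the stratum index recording the codimension of $\Pp^{n+4}$ in $\Pp^{10}$ modulo the $\langle[Q]\rangle$ summand.

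For the map $\mathrm{(b)}\to\mathrm{(a)}$, given $(A,\mathfrak p)$, let $V_5\subset V_6$ be the hyperplane corresponding to $\mathfrak p$, choose a splitting $V_6=V_5\oplus\langle v_0\rangle$ and write $\bigwedge^3 V_6=\bigwedge^3 V_5\oplus(v_0\wedge\bigwedge^2 V_5)$. Reading $\bigwedge^3 V_5$ as linear forms on $\Pp(\bigwedge^2 V_5\oplus k)=\Pp^{10}$, the subspace $A\cap\bigwedge^3 V_5$ (of dimension $5-n$) together with the cone-point coordinate cuts out a linear subspace $\Pp^{n+4}$, hence $M_X:=\mathsf C(\Gr(2,V_5))\cap\Pp^{n+4}$; the projection of $A$ to $v_0\wedge\bigwedge^2 V_5\cong\bigwedge^2 V_5$, constrained by the Lagrangian condition, determines a quadric $Q\subset\Pp^{n+4}$ modulo the quadrics through $M_X$; and one sets $X:=M_X\cap Q$. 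It remains to check that $X$ is a GM variety of dimension $n$ with $M_X$ smooth, that this is a two-sided inverse of the previous construction independently of the chosen splitting and trivialization, and that both constructions are $\operatorname{PGL}(V_6)$-equivariant: an isomorphism $X\xrightarrow{\ \sim\ }X'$ induces a linear isomorphism $V_6(X)\xrightarrow{\ \sim\ }V_6(X')$ carrying $(V_5(X),A(X))$ to $(V_5(X'),A(X'))$, and conversely such a linear isomorphism transports the reconstructed varieties into one another.

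I expect the main obstacle to be the honest construction of $A(X)$ together with the proof that it is Lagrangian with no decomposable vectors: this is not formal, it requires organizing the Pl\"ucker relations and the quadric $Q$ correctly inside $\bigwedge^3 V_6(X)$, and it must be done uniformly across the ordinary and the Gushel (double-cover) GM varieties. A second delicate point is upgrading the two constructions from well-defined maps to a genuine \emph{bijection}: one must show that $X$ is recovered \emph{functorially} from $(V_6(X),V_5(X),A(X))$, so that $\operatorname{Aut}(X)$ coincides with the stabilizer of the data and no iso-classes are collapsed. The remaining points --- the dimension counts, the identification of $5-n$ with the stratum index, and independence of all auxiliary choices --- are routine linear algebra.
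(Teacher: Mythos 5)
First, a point of context: the paper does not prove this statement --- it is quoted from Debarre--Kuznetsov \cite[Theorem 3.10]{debarre2018gushel} and used as a black box --- so there is no internal argument to compare yours against. Measured against the actual Debarre--Kuznetsov proof, your outline follows the right strategy: attach the Lagrangian data $(V_6(X),V_5(X),A(X))$ to $X$, reconstruct $X$ from a pair $(A,\mathfrak p)$, and check that isomorphisms of GM varieties correspond to the $\operatorname{PGL}(V_6)$-action. The identification of $V_6(X)$ with the quadrics through $X$, of $V_5(X)$ with the Pl\"ucker quadrics, and of the stratum index $5-n$ with $\dim(A\cap\bigwedge^3V_5)$ are all as in loc.\ cit.

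That said, as a proof this is a roadmap rather than an argument, and the gaps are exactly where you say they are. The Lagrangian $A(X)$ is never actually defined: ``extract $A(X)$ from the equations of $X$'' is the entire content of the hard direction, and without the explicit formula (in Debarre--Kuznetsov this passes through the intermediate notion of a GM data set $(W,V_6,V_5,L,\mu,\mathbf q,\varepsilon)$ and an explicit description of $A$ inside $\bigwedge^3V_5\oplus(v_0\wedge\bigwedge^2V_5)$) none of the subsequent assertions --- that $A(X)$ is isotropic of dimension $10$, that the absence of decomposable vectors is equivalent to smoothness of the Grassmannian hull, that $\dim(A(X)\cap\bigwedge^3V_5(X))=5-n$ --- can be checked. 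Two further points need care: (1) the claim $\dim V_6(X)=6$ requires knowing that $X$ is cut out by quadrics and is projectively normal, not merely exhibiting the five Pl\"ucker quadrics plus $Q$; (2) bijectivity on isomorphism classes requires the reconstruction of $X$ from $(A,\mathfrak p)$ to be canonical, i.e.\ independent of the splitting $V_6=V_5\oplus\langle v_0\rangle$ and of the trivialization of $\bigwedge^6V_6$, and to treat the ordinary and Gushel (cone-vertex) cases uniformly; you flag both issues but do not resolve them. In short, the proposal is the correct skeleton of the Debarre--Kuznetsov argument with the substantive verifications still to be supplied.
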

The first result of this work concerns isomorphism and $L$-equivalence classes of derived equivalent GM surfaces. Let us first recall the notion of $L$-equivalence. \\
If $k$ is a field, the \emph{Grothendieck group of $k$-varieties} is the free abelian group $K_0(\operatorname{Var}_k)$ generated by isomorphism
classes $[X\rightarrow k]$ of finite type varieties over $k$, modulo the scissor relations, namely the identities 
$[Y]=[X]+[ Y\setminus X]$ whenever $X\hookrightarrow Y$ is a closed subvariety of $Y$. The group $K_0(\operatorname{Var}_k)$ is a ring via $[X]\cdot [Y]=[X\times_kY]$. \\
We denote by $\mathbb L=[\mathbb{A}_k^1]$ the Lefschetz motive, that is the class of the affine line over $k$.
\begin{definition}
    Two smooth, connected, projective varieties $X$ and $Y$ are $L$-equivalent if in $K_0(\operatorname{Var}_k)$ for some $n\geq 0$$$\mathbb{L}^n\cdot([X]-[Y])=0.$$
    We say that they are \textit{nontrivially} $L$-equivalent if in addition $[X]\neq [Y]$.
\end{definition}

We can now formulate our first result.

\begin{introthm}\label{thmYS1}
    Let $A\subset\bigwedge^3 V_6$ be a very general lagrangian subspace such that $$Y_A^{\geq 3}\neq\emptyset\quad\hbox{and}\quad Y_{A^\perp}^{\geq 3}\neq\emptyset.$$
    Let $S_1,\, S_2$ be the Gushel--Mukai surfaces corresponding to points $\mathfrak{p}_1\in Y_A^{\geq 3}$ and $\mathfrak{p}_2\in Y_{A^\perp}^{\geq 3}$. Then $S_1$ and $S_2$ are derived and $L$-equivalent but not isomorphic.
\end{introthm}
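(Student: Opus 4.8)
The plan is to prove the three constituent statements separately, using Theorem~\ref{thmDK} to rephrase everything in terms of Lagrangian data. By that theorem $S_1$ is the Gushel--Mukai surface attached to the pair $(A^\perp,\mathfrak p_1)$ and $S_2$ the one attached to $(A,\mathfrak p_2)$; both are K3 surfaces carrying a polarization of degree $10$. Thus one has to show: (i) $D^b(S_1)\simeq D^b(S_2)$; (ii) $\mathbb{L}^m\cdot([S_1]-[S_2])=0$ in $K_0(\operatorname{Var}_k)$ for some $m\geq 0$; and (iii) $S_1\not\cong S_2$ whenever $A$ is very general in the locus $\mathcal{N}:=\{A : Y_A^{\geq 3}\neq\emptyset\ \text{and}\ Y_{A^\perp}^{\geq 3}\neq\emptyset\}$.

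For (i) I would appeal to the work of Kuznetsov and Perry. A Gushel--Mukai surface is a K3 surface, whose bounded derived category carries no nontrivial semiorthogonal decomposition (a connected Calabi--Yau triangulated category is indecomposable), hence it coincides with its Kuznetsov component. The comparison theorem for Kuznetsov components of Gushel--Mukai varieties with orthogonal Lagrangian data in \cite{kp2023} then yields an exact equivalence $D^b(S_1)=\mathcal{K}u(S_1)\simeq\mathcal{K}u(S_2)=D^b(S_2)$, which is automatically of Fourier--Mukai type by Orlov's representability theorem. (Equivalently, the period analysis for Gushel--Mukai varieties shows that dual Gushel--Mukai surfaces have Hodge-isometric transcendental lattices, and one concludes by Orlov's derived Torelli theorem.)

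For (ii) the relevant feature is that the equivalence of (i) is \emph{geometric}: it is induced by homological projective duality for the Grassmannian $\Gr(2,V_5)$ together with the Gushel maps, which present Gushel--Mukai varieties as sections of a quadric bundle. I would then run the Kuznetsov--Shinder method for families of quadrics sharing a discriminant: resolving the associated correspondence between $S_1$ and $S_2$ by successive blow-ups inside the ambient projective bundles and applying the scissor relations in $K_0(\operatorname{Var}_k)$ produces, after multiplying by a power of $\mathbb{L}$, the identity $\mathbb{L}^m\cdot([S_1]-[S_2])=0$. This is essentially bookkeeping once the correspondence is written down; one could alternatively route it through the pair of double EPW fourfolds $\widetilde{Y}_A$ and $\widetilde{Y}_{A^\perp}$.

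Part (iii) is the heart of the matter and the step I expect to be the main obstacle. If $S_1\cong S_2$, then---since Theorem~\ref{thmDK} is a \emph{bijection}---the data $(A^\perp,\mathfrak p_1)$ and $(A,\mathfrak p_2)$ are projectively equivalent, and already looking at the Lagrangian component this forces $A$ to be self-dual, i.e.\ $[A]=[A^\perp]$ in the GIT moduli space $\mathcal{M}$ of Lagrangian subspaces of $\bigwedge^3 V_6$ with no decomposable vectors. So it is enough to show that a very general $A\in\mathcal{N}$ is not self-dual. The duality $[A]\mapsto[A^\perp]$ is an involution $\iota$ of $\mathcal{M}$ that exchanges the conditions $Y_A^{\geq 3}\neq\emptyset$ and $Y_{A^\perp}^{\geq 3}\neq\emptyset$, so $\mathcal{N}$ is $\iota$-stable, and one must prove that no irreducible component of $\mathcal{N}$ lies inside $\operatorname{Fix}(\iota)$. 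I would establish this by comparing the (very general) polarized Picard lattices of $S_1$ and $S_2$: membership of $A$ in $\mathcal{N}$ forces each of $S_1,S_2$ to acquire an extra algebraic class---one controlled by $Y_A^{\geq 3}$, the other by $Y_{A^\perp}^{\geq 3}$---and the resulting rank-two polarized lattices (or, should these be abstractly isometric, the full polarized Hodge structures) do not match for very general $A\in\mathcal{N}$, since the relevant obstruction vanishes identically on $\operatorname{Fix}(\iota)$ but not on $\mathcal{N}$. Verifying this non-vanishing---equivalently, exhibiting one explicit $A_0\in\mathcal{N}$ with $[A_0]\neq[A_0^\perp]$ and then invoking irreducibility and specialization---is the crucial and most delicate point, and is precisely where the ``very general'' hypothesis does its work.
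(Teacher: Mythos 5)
Your step (i) is correct and is exactly what the paper does: a GM surface has no exceptional pairs in its Kuznetsov--Perry decomposition, so $\mathcal A_{S_i}=D^b(S_i)$, and the duality theorem of \cite{kp2023} gives the equivalence. The genuine gaps are in (ii) and, above all, (iii). For (ii) you only gesture at ``the Kuznetsov--Shinder method for families of quadrics'' without producing the correspondence or the blow-up bookkeeping; it is not clear this machinery applies to a pair of dual GM \emph{surfaces}. The paper's actual route is entirely different and hinges on a geometric identification you do not make: $S_1$ is isomorphic to a surface of type $S_{20}=\cal Z(\Pp^1\times W_5,\oo(1,1)^{\oplus 2})$, hence an elliptic K3 surface with $\operatorname{NS}\cong U(5)$ whose fibres are elliptic quintics, and $S_2$ is identified fibrewise (and over the generic point of $\Pp^1$) with $\jac^2(S_1/\Pp^1)$ via Shinder's duality $C'\cong\jac^3(C)$ for $C=\Gr(2,V_5)\cap\Pp(L^\perp)$, $C'=\Gr(2,V_5^\vee)\cap\Pp(L)$. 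The $L$-equivalence $\mathbb L^4([S_1]-[S_2])=0$ is then the statement of \cite[Theorem 3.2]{ShinderEq20} for coprime Jacobians of elliptic quintic fibrations.

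For (iii) your reduction is flawed in two ways. First, Theorem~\ref{thmDK} classifies GM varieties up to isomorphism \emph{of GM varieties}; for surfaces this is a polarized notion, so an abstract isomorphism of the underlying K3 surfaces need not identify the Lagrangian data, and the implication ``$S_1\cong S_2\Rightarrow[A]=[A^\perp]$'' does not follow. Second, the discriminating invariant you propose cannot work: both surfaces have N\'eron--Severi lattice isometric to $U(5)$ (this is Lemma~\ref{lmS20} of the paper), so no comparison of (polarized) Picard lattices will separate them, and your fallback---comparing full Hodge structures---is exactly what one cannot do directly, since the two surfaces are derived equivalent and hence have Hodge-isometric transcendental lattices. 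You yourself flag the key non-self-duality claim as unverified, so the argument is incomplete at its crucial point. The paper instead concludes from the identification $S_2\cong\jac^2(S_1/\Pp^1)$ together with \cite[Proposition 3.10(3)]{ShinderEq20}: a very general elliptic K3 surface with $\operatorname{NS}\cong U(5)$ is not isomorphic to its degree-two Jacobian. That counting-of-Fourier--Mukai-partners input is the missing idea in your proposal.
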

To put this theorem into some context, it is worth noting that, among the results of \cite{kuznetsov2018derived}, it is shown that any smooth GM variety $X$ of even dimension contains a K3 category $\cal A_X$. It was conjectured in the same work that if $X_1$ and $X_2$ are generalized duals then $\cal A_{X_1}\cong\cal A_{X_2}$. This has been proved in the subsequent work \cite[Corollary 6.5]{kp2023} and will play a substantial role in our construction. Moreover, the relation between $D$- and $L$-equivalence is still not entirely understood. It was conjectured in \cite{kuznetsov2018grothendieck} that the former implies the latter for simply connected varieties, although this was recently disproved by Meinsma in \cite{meinsma2025counterexamples}. On the other hand, a variation of this conjecture is still open. In the work \cite[Problem 7.2]{ito2020derived}, for a pair $(X,Y)$ of FM partners the equality $[X]=[Y]$ is expected to hold in a quotient of $K_0(\operatorname{Var}_k)[\mathbb{L}^{-1}]$ modulo isogeneous Abelian varieties. A special case has been proved in \cite{caucci2023derived} under further assumptions on the canonical class $K_X$.\\

Our original motivation to delve into this topic was the study of some examples of Fano fourfolds of K3 type. As in the case of GM varieties, their study is particularly interesting due to connections with Hyperk\"ahler geometry.
\begin{definition}
    Let $H=\bigoplus_{p+q=k}H^{p,q}$ be a non-zero Hodge structure. We say that $H$ is of K3 type if $$\max\{q-p\,|\,H^{p,q}\neq 0\}=2\quad\hbox{and}\quad h^{\frac{k+2}{2},\frac{k-2}{2}}=1.$$
\end{definition}
\begin{definition}{\cite[Definition 2.2]{BerFatManTan21}}
    Let $X$ be a smooth Fano variety. We say that $X$ is a Fano variety of K3 type if $H^*(X,\Z)$ contains at least one sub-Hodge structure of K3 type, and at most one for any given weight. 
\end{definition}
An initial source of examples was provided by Küchle in \cite{Kuc95}. Further ones were introduced by Fatighenti and Mongardi in \cite{FatMon21}, where they constructed several families of such varieties. More recently, Bernardara, Fatighenti, Manivel, and Tanturri \cite{BerFatManTan21} gave a comprehensive list of 64 families. The latter classify Fano fourfolds of K3 type which can be constructed as the zero loci of general global sections of homogeneous vector bundles on products of flag manifolds. Some of these families have been studied by J. Hernandez-Gomez in his thesis \cite{hernandezgomez}. We focus on the one called of type \textit{K3-38}. 

\bigskip

A Fano fourfold $X$ is of type K3-38 if it is given by the zero locus in $\Pp^1\times\Pp^1\times\Gr(2,V_5)$ of a general global section in $$H^0 (\Pp^1\times\Pp^1\times\Gr(2,V_5),\oo(1,1,1)\oplus\oo(0,0,1)^{\oplus 3}).$$
Varieties in this family admit two semiorthogonal decompositions for their bounded derived category, both being induced by the projection maps to the factors $\Pp^1\times \Gr(2,V_5)$ and $\Pp^1\times\Pp^1$. Namely, the first one is given by 
$$
D^b(X)=\langle D^b(\Pp^1\times W_5), D^b(S_{20})\rangle,
$$
where $W_5$ is a Fano threefold of index two and degree five, and $S_{20}$ is a K3 surface of degree 20. The existence of such a decomposition is a classical consequence of Orlov's fomula for monoidal transformations \cite{Orlov_1993}. The second one instead consists of 
$$
 D^b(X)=\langle D^b(\Pp^1\times\Pp^1), D^b(\Pp^1\times\Pp^1),D^b(Z)\rangle,
$$
with $Z\rightarrow\Pp^1\times\Pp^1$ a finite flat cover of degree five. It is obtained as an application of a recent result by F. Xie \cite{Xie2021} relating del Pezzo 5 fibrations and semiorthogonal decompositions whose Kuznetsov components are given by derived categories of certain varieties admitting a 5:1 cover of the base. Our second result is a structure theorem for these decompositions and for the varieties involved.
\begin{introthm}\label{thmYS2}
Let $X$ be a very general Fano fourfold of K3-38 type.
    \begin{enumerate}
        \item The two semiorthogonal decompositions above do not coincide.
        \item The surface $Z$ is a K3 surface, derived and $L$-equivalent to $S_{20}$ but not isomorphic.
    \end{enumerate}
\end{introthm}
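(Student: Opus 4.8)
The plan is to identify the K3 surfaces $S_{20}$ and $Z$ appearing in the two semiorthogonal decompositions with the Gushel--Mukai surfaces $S_1, S_2$ of Theorem~\ref{thmYS1}, and then transport the conclusions. First I would analyze the geometry of $X = \{s = 0\} \subset \Pp^1\times\Pp^1\times\Gr(2,V_5)$ via the two projections. Projecting to $\Pp^1\times\Pp^1$: the section $s \in H^0(\oo(1,1,1)) \oplus H^0(\oo(0,0,1))^{\oplus 3}$ cuts, over a point $(p,q)$, a linear section of $\Gr(2,V_5)$ by a $\Pp^4$ (four hyperplanes: three fixed, one depending on $(p,q)$). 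Generically this is a degree-five del Pezzo threefold $W_5$ when the moving hyperplane is generic, and drops to a del Pezzo surface $W_4$, or degenerates further, along loci in $\Pp^1\times\Pp^1$; the locus where the linear section has the wrong dimension (equivalently where the fiber jumps) is where Xie's construction produces the $5{:}1$ cover $Z\to\Pp^1\times\Pp^1$. I expect $Z$ to parametrize, fiberwise, the hyperplanes in the relevant pencil containing a fixed $W_4$, which is exactly the setup of \cite{Xie2021}. Projecting instead to $\Pp^1\times\Gr(2,V_5)$: the three sections of $\oo(0,0,1)$ cut out $\Pp^1\times W_5$ (with $W_5 = \Gr(2,V_5)\cap\Pp^6$ the index-two degree-five threefold), and the remaining section of $\oo(1,1,1)$ restricted there defines a divisor which is a $\Pp^1$-bundle blown up, or rather $X\to\Pp^1\times W_5$ is the blow-up along the vanishing of a section of $\oo(1,1)\boxtimes\oo_{W_5}(1)$; Orlov's blow-up formula then yields the first decomposition with $S_{20}$ the blown-up K3 surface of degree $20$.

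Next I would show that both $S_{20}$ and $Z$ carry the structure of (or are derived-equivalent to) GM surfaces associated to a common Lagrangian data, so that \cite[Corollary 6.5]{kp2023} applies. The key point is that $X$ itself should be a Fano fourfold whose Kuznetsov component (in either decomposition) is equivalent to the K3 category $\cal A_X$ of an even-dimensional GM variety, or more directly that $S_{20}$ and $Z$ are themselves GM surfaces: a GM surface is $\mathsf{C}(\Gr(2,V_5))\cap\Pp^6\cap Q$, and a degree-$20$ K3 is a candidate since $\deg = 20$ matches $\Gr(2,5)\cap\Pp^6\cap Q$. I would show that, as $(p,q)$ varies, the family of linear-plus-quadratic sections realizes $S_{20}$ and $Z$ as the two GM surfaces attached to a single Lagrangian $A\subset\bigwedge^3 V_6$ and dual points $\mathfrak p_1\in Y_A^{\geq 3}$, $\mathfrak p_2\in Y_{A^\perp}^{\geq 3}$ — the two $\Pp^1$ factors playing symmetric roles encode precisely the passage from $A$ to $A^\perp$. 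Once this identification is made, Theorem~\ref{thmYS1} gives immediately that $S_{20}$ and $Z$ are K3 surfaces, derived equivalent (both equal to $\cal A_X$), $L$-equivalent, and non-isomorphic, which is part (2); and part (1) follows since two semiorthogonal decompositions with non-isomorphic K3 Kuznetsov components cannot coincide. I should also check that the very general $X$ of type K3-38 corresponds to a very general Lagrangian $A$ satisfying $Y_A^{\geq 3}\neq\emptyset$ and $Y_{A^\perp}^{\geq 3}\neq\emptyset$ — a dimension count on the parameter space of K3-38 fourfolds versus the $20$-dimensional family of EPW data, together with the constraint that both dual $Y^{\geq 3}$ loci are nonempty (a codimension-$2$-type condition, automatically met on a positive-dimensional subfamily).

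The main obstacle I anticipate is the precise identification of $Z$ with a GM surface and the verification that it is dual (in the sense of Theorem~\ref{thmDK}) to $S_{20}$ rather than projectively equivalent to it. Xie's theorem produces $Z$ abstractly as the base of a category equivalence, so one must give $Z$ an explicit moduli description: I would realize $Z\subset \Pp^1\times\Pp^1\times(\text{something})$ as the relative Hilbert scheme / Stein factorization of the del Pezzo $5$-fibration $X\to\Pp^1\times\Pp^1$, then match its Hodge structure (a degree-$20$ K3 lattice with the extra cyclic symmetry coming from the $5{:}1$ cover) to the period of the GM surface $S_2$ built from $\mathfrak p_2\in Y_{A^\perp}^{\geq 3}$. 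The symmetry exchanging the two $\Pp^1$ factors of $\Pp^1\times\Pp^1\times\Gr(2,V_5)$ is what should implement the $A\leftrightarrow A^\perp$ duality at the level of these surfaces, and making that correspondence canonical — rather than merely numerical — is the delicate step; everything else is bookkeeping with Orlov's formula, Xie's decomposition, and the quoted results \cite{kp2023, debarre2018gushel}. A secondary subtlety is ensuring "very general $X$" survives all the genericity hypotheses simultaneously (smoothness of the linear sections, $A$ having no decomposable vectors, both $Y^{\geq 3}$ nonempty, and the very-general condition of Theorem~\ref{thmYS1}), which I would handle by intersecting countably many dense open/non-empty conditions on the irreducible parameter space of K3-38 fourfolds.
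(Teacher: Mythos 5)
Your overall strategy for part (2) is the one the paper follows: identify $S_{20}$ and $Z$ with a pair of dual GM surfaces and then quote Theorem~\ref{thmYS1}. In the paper this identification is made explicitly and geometrically rather than via periods: $S_{20}$ is identified with $S=\Gr(2,V_5)\cap\mathsf{C}_{\Pp(D^\vee)}(\Pp(V_2^\vee)\times\Pp(V_2'^\vee))$ by the Cayley trick/degeneracy-locus argument, and $Z$ is identified with $T$ in \eqref{eqGMsdual} by observing that the homological projective dual of $W_5\subset\Pp(\Sigma^\perp)$ is the $5{:}1$ cover $\Gr(2,V_5^\vee)\to\Pp(\bigwedge^2V_5^\vee/\Sigma)$, whose base change along $\Pp(V_2)\times\Pp(V_2')\hookrightarrow\Pp(\bigwedge^2V_5^\vee/\Sigma)$ is literally Xie's cover; so $Z=\Gr(2,V_5^\vee)\cap\mathsf{C}_{\Pp(\Sigma)}(\Pp(V_2)\times\Pp(V_2'))=T$. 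This is exactly the ``moduli/explicit description of $Z$'' you flag as the delicate step, and it is resolved by HPD for $\Gr(2,5)$ plus Theorem~\ref{thmxie}, not by a symmetry exchanging the two $\Pp^1$ factors (the duality here is between $\Gr(2,V_5)$ and $\Gr(2,V_5^\vee)$, i.e.\ between the quadric cones with vertices $\Pp(D^\vee)$ and $\Pp(\Sigma)$; the two $\Pp^1$'s play symmetric roles only with each other). Also note a dimensional slip: the fibres of $\pi_{12}$ over closed points of $\Pp^1\times\Pp^1$ are quintic del Pezzo \emph{surfaces} (four hyperplane sections of $\Gr(2,V_5)$), not threefolds degenerating to surfaces; this is what makes Xie's theorem applicable at all.

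There is a genuine gap in your argument for part (1). You claim the decompositions cannot coincide ``since two semiorthogonal decompositions with non-isomorphic K3 Kuznetsov components cannot coincide,'' but the Kuznetsov components are $D^b(S_{20})$ and $D^b(Z)$, and these \emph{are} equivalent as triangulated categories --- that is precisely the derived equivalence you establish in part (2). Non-isomorphism of the surfaces is compatible with the two subcategories of $D^b(X)$ being literally the same (a K3 surface is not reconstructible from its derived category), so no contradiction arises and your argument proves nothing. The paper's argument is different and does work: the component $D^b(S_{20})$ in \eqref{SODbl} is $(\Pp^1\times W_5)$-linear by Orlov's formula, while $D^b(Z)$ in \eqref{SODxie} is $(\Pp^1\times\Pp^1)$-linear by \cite{Xie2021}; if the two coincided as subcategories, the common component would be linear over $\Pp^1\times\Pp^1\times W_5\supset X$, and an $M$-linear semiorthogonal component of $D^b(M)$ must be zero or everything, contradicting the fact that these components are proper and nonzero. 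You need some argument of this kind (or another invariant distinguishing the subcategories inside $D^b(X)$, not just the abstract categories) to close part (1).
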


To our knowledge, the variety $Z$ appearing in the theorem above is the first example known of a K3 surface obtained as Xie's cover. This leaves open studying other families in \cite{BerFatManTan21} where such covers appear.
\subsection*{Structure of the paper} In Section \ref{sectionGM} we first recall the construction of the surfaces appearing in Theorem \ref{thmYS1} and prove their $D$-equivalence. Then, we show that they admit the structure of elliptic surfaces and translate the problem of determining whether they are isomorphic into a problem of studying isomorphism classes of Jacobians of elliptic surfaces

The first part of Section \ref{SectionFK3} is devoted to describe some basic properties of Fano fourfolds of type K3-38 and how the semiorthogonal decompositions mentioned before arise. Then, we relate the surfaces in both SOD's to the ones defined in Section \ref{sectionGM}. This will be enough to conclude Theorem \ref{thmYS2}.
\subsection*{Notation}
We work over an algebraically closed field $k$ of characteristic zero. For the Grassmannian $\Gr(d,V_n)$, we denote by $\mathcal{U}$ the tautological bundle and by $\mathcal{Q}$ the universal quotient bundle. We write $\oo_{\Gr(d,V_n)}(1)$ to indicate the determinant of $\mathcal{Q}$. Lastly, we use the notation $\Pp_B(-)$ and $\Gr_B(k,V_n)$ to indicate a projective bundle and a Grassmann bundle respectively on a base scheme $B$. In accordance with \cite{BerFatManTan21}, if $\cal E$ is a globally generated vector bundle on a variety $X$, we denote $$Y=\cal Z(X,\cal E)$$
for the closed subvariety $Y$ defined as the zero locus of a general global section $\sigma\in H^0(X,\cal E)$. 
\subsubsection*{Acknowledgments}
We warmly thank A. Kuznetsov for suggesting us the some of the key ideas and for his comments on a previous draft of the paper, which greatly improved many aspects of it. We are grateful to M. Bernardara for introducing us to the problem. We thank E. Fatighenti, F. Tufo and S. Secci for many helpful conversations. Lastly, we thank F. Caucci for his help on the literature regarding $D$- and $L$-equivalence. The first author is partially supported by the INICIO-2025-13 grant, titled ``Nuevas direcciones en las variedades hyperkähler y su interacción con las variedades de Fano" ID 250623055. The second author is a member of INDAM-GNSAGA.
\section{Gushel--Mukai surfaces and 
duality}\label{sectionGM}
The purpose of this section is to prove Theorem \ref{thmYS1}. 
We begin by giving a description of the varieties involved.

Let $\sigma_1,\sigma_2,\sigma_3\in H^0(\Gr(2,V_5),\oo(1))$ be general sections; denote by $\Sigma:=\hbox{Span}(\sigma_1,\sigma_2,\sigma_3)\subset\bigwedge^2V_5^\vee$, and $\Sigma^\perp\subset\bigwedge^2V_5$ its 7-dimensional orthogonal complement. Let $W_5:=\Gr(2,V_5)\cap\Pp(\Sigma^\perp)$ the corresponding del Pezzo 3-fold. It is of Picard rank one and degree five (see \cite{fanography}, \href{https://www.fanography.info/1-15}{tag 1-15} for more details). For what follows we will denote $\oo_{W_5}(1)$ for the restriction to $W_5$ of $\oo_{\Gr(2,V_5)}(1)$. 

Let $V_2,\, V_2'$ be 2-dimensional vector spaces, and consider a general section $\sigma\in H^0(\Pp(V_2)\times\Pp(V_2')\times W_5,\oo(1,1,1))$. Let 
\begin{equation}\label{eq4}
V_2\otimes V_2'\rightarrow\bigwedge^2V_5^\vee\bigg/\Sigma
\end{equation}
be the embedding defined by $\sigma$. Indeed, since we have an identification between $H^0(\Pp(V_2)\times\Pp(V_2')\times W_5,\oo(1,1,1))$ and $$ V_2^\vee\otimes V_2'^\vee\otimes\left(\bigwedge^2V_5^\vee\bigg/\Sigma\right)\cong\hbox{Hom}\left(V_2\otimes V_2',\bigwedge^2V_5^\vee\bigg/\Sigma\right),$$ the choice of a general global section corresponds to the embedding in (\ref{eq4}). Lastly, define $D:=(\bigwedge^2V_5^\vee/\Sigma)/(V_2\otimes V_2')$. This gives $\bigwedge^2V_5^\vee$ a filtration whose factors are $\Sigma,\,V_2\otimes V_2',\,D$ of dimensions 3, 4, and 3 respectively.\\
Moreover, by duality there is an induced filtration on $\bigwedge^2V_5$ with factors $D^\vee,\,V_2^\vee\otimes V_2'^\vee,\,\Sigma^\vee$. Given these varieties, there are two associated GM surfaces corresponding via the bijection stated in Theorem \ref{thmDK} to the ones in Theorem \ref{thmYS1}.
\begin{enumerate}
    \item The cone $\mathsf{C}_{\Pp(D^\vee)}(\Pp(V_2^\vee)\times\Pp(V_2'^\vee))$ over the quadric surface $\Pp(V_2^\vee)\times\Pp(V_2'^\vee)$ with vertex $\Pp(D^\vee)$ is a quadric of codimension 4 in $\Pp(\bigwedge^2V_5)$. The intersection 
    \begin{equation}\label{eqGMs}
    S:=\Gr(2,V_5)\cap\mathsf{C}_{\Pp(D^\vee)}(\Pp(V_2^\vee)\times\Pp(V_2'^\vee))
    \end{equation}
    is a K3 surface.
    \item Consider the cone $\mathsf{C}_{\Pp(\Sigma)}(\Pp(V_2)\times\Pp(V_2'))$ over the quadric surface $\Pp(V_2)\times\Pp(V_2')\subset\Pp(V_2\otimes V_2')$ with vertex $\Pp(\Sigma)$. Once again, it is a quadric of codimension 4 in $\Pp(\bigwedge^2V_5^\vee)$, and the intersection 
    \begin{equation}\label{eqGMsdual}
    T:=\Gr(2,V_5^\vee)\cap\mathsf{C}_{\Pp(\Sigma)}(\Pp(V_2)\times\Pp(V_2'))
    \end{equation}
    is another K3 surface.
\end{enumerate}
\begin{definition}{\cite[Definition 3.5]{kuznetsov2018derived}}
    Let $X_1$ and $X_2$ be GM varieties. If there exists an isomorphism $V_6(X_1)\cong V_6(X_2)$ which identifies $A(X_1)\subset\bigwedge^3 V_6(X_1)$ with $A(X_2)^\perp\subset \bigwedge^3 V_6(X_2)^\vee$ then 
    \begin{itemize}
        \item $X_1$ and $X_2$ are \emph{dual} if $\dim(X_1)=\dim(X_2)$ and 
        \item $X_1$ and $X_2$ are \emph{generalized dual} if $\dim(X_1)\equiv\dim(X_2)\mod (2)$.
    \end{itemize}
\end{definition}
By \cite[Proposition 3.28]{debarre2018gushel}, the projective duality between the quadrics in $S$ and $T$ implies that these surfaces are generalized duals. Their derived equivalence then follows from \cite[Corollary 6.5]{kp2023}, which we report below. 
\begin{theorem}{\cite[Corollary 6.5]{kp2023}}
Let $X_1$ and $X_2$ be smooth GM varieties whose associated Lagrangian subspaces $A(X_1)$ and $A(X_2)$ do not contain decomposable vectors. If $X_1$ and $X_2$ are duals, then there is an equivalence $\cal A_{X_1}\cong \cal A_{X_2}$.
\end{theorem}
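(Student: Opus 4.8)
The plan is to deduce the equivalence from homological projective duality (HPD) for the Grassmannian $\Gr(2,V_5)$, in the version that also handles cones and quadric sections. Recall that a GM variety $X$ of dimension $n$ is cut out inside the cone $\mathsf C(\Gr(2,V_5))$ by a linear subspace $\Pp^{n+4}$ together with a quadric $Q$, and that by the results of \cite{kuznetsov2018derived} its derived category carries a semiorthogonal decomposition $D^b(X)=\langle\cal A_X,\dots\rangle$ whose orthogonal complement is generated by exceptional objects pulled back from $\Gr(2,V_5)$. The first step is to recall the description of $\cal A_X$ purely in terms of the Lagrangian data $A(X)\subset\bigwedge^3 V_6(X)$: it is equivalent to the bounded derived category of coherent modules over a sheaf of finite algebras (a Clifford/Azumaya-type algebra) on an EPW-type variety attached to $A(X)$ — for instance on a double cover of $Y_{A(X)}^{\geq 1}$ — whose Morita-equivalence class depends only on the $\operatorname{PGL}(V_6)$-orbit of $A(X)$.

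The heart of the argument is that this description is insensitive to the operation $A\mapsto A^\perp$, and this is precisely the shadow of HPD. The HPD-partner of the (categorical) cone over $\Gr(2,V_5)$ in its Pl\"ucker embedding is built from $\Gr(2,V_5^\vee)=\Gr(3,V_5)$, and passing a quadric section of such a cone through HPD — this is where the quadratic, categorical-cone formalism of \cite{kp2023} enters, replacing $Q$ by a square-root stack carrying a Clifford algebra — turns the symplectic space $\bigwedge^3 V_6$ into $\bigwedge^3 V_6^\vee$ and the Lagrangian $A(X)$ into its orthogonal $A(X)^\perp$, while the distinguished hyperplane $V_5\subset V_6$ is traded for the $V_5$-point prescribed by Theorem \ref{thmDK}. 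Concretely, I would identify both $\cal A_{X_1}$ and $\cal A_{X_2}$ with one and the same sheaf-of-algebras category: for $X_2$ directly from its Lagrangian data, and for $X_1$ after running HPD once, which converts $A(X_1)$ into $A(X_1)^\perp$.

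To finish, one invokes the hypothesis that $X_1$ and $X_2$ are \emph{dual}: there is an isomorphism $V_6(X_1)\cong V_6(X_2)$ identifying $A(X_1)$ with the orthogonal $A(X_2)^\perp$, and since $\dim X_1=\dim X_2$ the corresponding $V_5$-points on the (dual) EPW stratifications match up as well. Composing this isomorphism with the HPD identifications of the previous step yields $\cal A_{X_1}\cong\cal A_{X_2}$. The assumption that neither $A(X_1)$ nor $A(X_2)$ contains a decomposable vector is used throughout: it guarantees that the EPW sextic is normal and irreducible, that its double cover exists as a smooth space (or smooth Deligne--Mumford stack), and hence that the sheaf-of-algebras category is a genuine smooth triangulated category for which Morita-type equivalences behave as expected.

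The main obstacle I anticipate is the HPD bookkeeping for the quadric section. Plain HPD handles only linear sections, whereas a GM variety is the intersection of a \emph{quadric} with a linear section of a cone; one must therefore combine the categorical cone construction, to peel off the cone vertex, with quadratic HPD, which introduces the square-root stack and a sheaf of Clifford algebras, and then track how the Lagrangian data and all the resulting twists and homological shifts transform. Verifying that the abstract HPD-partner category produced this way is \emph{precisely} the EPW category attached to $A^\perp$, rather than some a priori unidentified twist of it, and checking compatibility of the distinguished $V_5$-point under the duality, is the technical core of the argument; once that identification is in hand, the equivalence $\cal A_{X_1}\cong\cal A_{X_2}$ follows formally.
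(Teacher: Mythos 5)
The paper does not prove this statement: it is imported verbatim from \cite[Corollary 6.5]{kp2023} and used as a black box to deduce the derived equivalence of the dual GM surfaces $S$ and $T$. So there is no in-paper proof to compare against; the only meaningful comparison is with the argument in the cited reference.

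Measured against that, your sketch correctly identifies the actual strategy of \cite{kp2023}: one realizes a GM variety as a quadric section of (a linear section of) the cone over $\Gr(2,V_5)$, combines the categorical cone construction with quadratic HPD (the square-root stack and sheaf of Clifford algebras), uses that $\Gr(2,V_5)$ and $\Gr(2,V_5^\vee)$ are HPD to each other, and checks that this machinery exchanges the Lagrangian $A$ with $A^\perp$ while matching the distinguished $V_5$-data; the hypothesis on decomposable vectors enters exactly as you say. The one substantive inaccuracy is your ``first step'': the identification of $\cal A_X$ with the derived category of modules over a sheaf of finite algebras on (a double cover of) the EPW sextic $Y_{A(X)}^{\geq 1}$ is \emph{not} part of the proof in \cite{kp2023} and is in general a separate, essentially open problem. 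Kuznetsov and Perry do not factor the equivalence through an ``absolute'' category attached to the Lagrangian alone; they construct the equivalence $\cal A_{X_1}\cong\cal A_{X_2}$ directly, by exhibiting the two GM categories as mutually dual (linear/quadratic) sections in the sense of the main theorem of HPD. Your argument as written would need that conjectural EPW-category description as an input, which makes it circular in spirit; dropping that framing and running the HPD comparison between the two varieties directly, as in your second and fourth paragraphs, is both what the reference does and what actually closes the argument.
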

We are left with proving that $S$ and $T$ are $L$-equivalent and not isomorphic.
\begin{definition}
    An \emph{elliptic surface} is a surface $X$ which admits a surjective morphism $f:X\rightarrow C$ where $C$ is a smooth curve, such that the geometric general fibre is a smooth integral curve of genus one. If $F\in\operatorname{NS}(X)$ is the class of a fibre, the \emph{multisection index} of $f$ is the minimal positive $t>0$ such that there exists a divisor $D\in\operatorname{NS}(X)$ with $D\cdot F=t$.
\end{definition}
\begin{remark}
    When the elliptic fibration considered $f:X\rightarrow C$ is clear from the context, we will drop the dependence of the multisection index on $f$ and just mention the multisection index of $X$
\end{remark}
For what follows, we will say that a surface is of type $S_{20}$ if it is given by $\cal Z(\Pp^1\times W_5,\oo(1,1)^{\oplus2})$. With a little abuse of notation, we will denote such surfaces by $S_{20}$. 

By adjunction formula, $K_{S_{20}}$ is trivial and by the Lefschetz hyperplane theorem the restriction map $H^1(\oo_{\Pp^1\times W_5})\rightarrow H^1(\oo_{S_{20}})$ is an isomorphism. Now, $\Pp^1\times W_5$ is Fano, so $H^1(\oo_{\Pp^1\times W_5})=0$ and $S_{20}$ is a K3 surface. 
\begin{remark}
    As shown in \cite[Corollary 0.3]{mukai1988curves}, a general K3 surface of degree 10 is indeed a complete transverse intersection of a Grassmannian $\Gr(2,V_5)\subset\Pp(\bigwedge^2 V_5)$ with three hyperplanes and a quadric, hence a GM surface. 
\end{remark}
\begin{lemma}\label{lmS20}
         A K3 surface of type $S_{20}$ has Picard rank 2 and degree 20 with respect to the polarization $\oo(1,1)|_{S_{20}}$. Moreover, The restriction to $S_{20}$ of the projection $\pi_1:\Pp^1\times W_5\rightarrow\Pp^1$ defines an elliptic fibration of multisection index 5. 
\end{lemma}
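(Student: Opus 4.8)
The plan is to handle the three assertions — the intersection form on $S_{20}$ (which yields the degree), the Picard rank, and the structure of $\pi_1|_{S_{20}}$ — with the Noether--Lefschetz step as the only substantial ingredient; everything else is a Chow-ring computation on $\mathbb{P}^1\times W_5$ together with adjunction.

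First I would fix notation on $Y:=\mathbb{P}^1\times W_5$, writing $H_1=\pi_1^*\mathcal{O}_{\mathbb{P}^1}(1)$ and $H_2=\pi_2^*\mathcal{O}_{W_5}(1)$. Since $W_5$ has Picard rank one, $\operatorname{Pic}(Y)=\mathbb{Z} H_1\oplus\mathbb{Z} H_2$, and in the Chow ring $H_1^2=0$, $H_2^4=0$, $H_1\cdot H_2^3=\deg W_5=5$. As $S_{20}$ is the zero locus of a section of $\mathcal{O}(1,1)^{\oplus 2}$, one has $[S_{20}]=(H_1+H_2)^2$, whence for $h_i:=H_i|_{S_{20}}$ a short expansion gives $h_1^2=0$, $h_1\cdot h_2=5$, $h_2^2=10$; in particular $\mathcal{O}(1,1)|_{S_{20}}=h_1+h_2$ has self-intersection $(H_1+H_2)^4=20$, which is the degree. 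By the Lefschetz hyperplane theorem the restriction $H^2(Y,\mathbb{Z})\to H^2(S_{20},\mathbb{Z})$ is injective, and since $Y$ carries no nonzero holomorphic two-form its second cohomology is spanned by $H_1,H_2$; hence $\operatorname{NS}(S_{20})\supseteq\mathbb{Z} h_1\oplus\mathbb{Z} h_2$, so $\rho(S_{20})\ge 2$. To get equality, i.e.\ $\operatorname{NS}(S_{20})=\mathbb{Z} h_1\oplus\mathbb{Z} h_2$ with Gram matrix $\left(\begin{smallmatrix}0&5\\5&10\end{smallmatrix}\right)$, I would invoke a Noether--Lefschetz argument valid for a very general defining section. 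This is the delicate point, and also the reason the hypotheses of Theorem~\ref{thmYS2} require $X$ to be very general.

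For the fibration, the fibre of $\pi_1|_{S_{20}}$ over a general point of $\mathbb{P}^1$ is cut out in $W_5$ by two sections of $\mathcal{O}_{W_5}(1)$, i.e.\ it is a codimension-two linear section $F=W_5\cap\mathbb{P}^4$. Because $W_5$ is a del Pezzo threefold of index two, $K_{W_5}=-2\mathcal{O}_{W_5}(1)$, so adjunction gives $K_F=(K_{W_5}+2\mathcal{O}_{W_5}(1))|_F=0$; the Lefschetz hyperplane theorem on $W_5$ shows $F$ is connected, and generic smoothness (characteristic zero) shows the general fibre is smooth. Thus the general fibre is a smooth integral curve of genus one, $\pi_1|_{S_{20}}$ is an elliptic fibration, and its fibre class is $[F]=h_1$.

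Finally, the multisection index: for $D=ah_1+bh_2\in\operatorname{NS}(S_{20})$ we have $D\cdot[F]=D\cdot h_1=5b$, so every divisor meets a fibre in a multiple of $5$ points, while $h_2$ realizes the value $5$; hence the multisection index equals $5$. I expect the main obstacle to be the Noether--Lefschetz input pinning down $\operatorname{NS}(S_{20})$: without it one only obtains that the multisection index divides $5$ and that $\rho(S_{20})\ge 2$, and the remaining arguments are routine.
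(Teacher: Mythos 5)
Your proposal is correct and follows essentially the same route as the paper: the same Chow-ring computation on $\Pp^1\times W_5$ yielding the Gram matrix $\left(\begin{smallmatrix}0&5\\5&10\end{smallmatrix}\right)$ and degree $20$, the same appeal to Noether--Lefschetz for Picard rank $2$, and the same adjunction-plus-generic-smoothness argument identifying the fibres of $\pi_1$ as genus-one curves, with the multisection index read off from the lattice. The only cosmetic difference is that you compute adjunction on $W_5$ (using $K_{W_5}=-2\oo_{W_5}(1)$) where the paper works directly on $\Gr(2,V_5)$; these are the same calculation.
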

\begin{proof}
    Since $\operatorname{Pic}(\Pp(V_2')\times W_5)\cong \Z^{\oplus 2}$, by Noether--Lefschetz we have that a surface of type $S_{20}$ has Picard rank two. Then, to compute its Néron--Severi lattice it is sufficient to use as a basis for $\operatorname{NS}(S_{20})$ the pullbacks of the two generators of $\operatorname{Pic}(\Pp(V_2')\times W_5)$. 
    Let $H$ and $h$ be the classes of $W_5$ and $\Pp^1$ in $\Pp^1\times W_5$, and denote $H_{S_{20}}$ and $h_{S_{20}}$ their restriction to $S$. Observe that $h$ and $H$ are the classes of two fibres, so $$h^2=0,\quad H^4=0,\quad\hbox{and}\quad H^3\cdot h=5.$$
    Then, the class of $S_{20}$ is equal to $(H+h)^2=H^2+2\,H\cdot h$, so it follows that 
    \begin{align*}
        h_{S_{20}}^2&=H^2\cdot h^2+2\, H\cdot h^3=0\\
        H_{S_{20}}\cdot h_{S_{20}}&=H^3\cdot h+2 H^2\cdot h^2=5\\
        H_{S_{20}}^2&=H^4+2\,H^3\cdot h=10\\
        [S_{20}]^2&=H^4+2\, H^2\cdot h^2+4\,H^3\cdot h=20.
    \end{align*}
    We conclude that 
    \begin{equation}\label{eqNS}
    \operatorname{NS}(S_{20})\cong \begin{pmatrix}
10 & 5 \\
5 & 0
\end{pmatrix},
    \end{equation}
which is isometric to the twisted hyperbolic lattice $U(5)$. To prove the second statement, notice first that the fibre of $\pi_1$ over $s\in\Pp(V_2')$ is given by 
    \begin{equation}\label{eqfib}
    (S_{20})_s=\mathcal{Z}(\Gr(2,V_5),\oo_{\Gr(2,V_5)}(1)^{\oplus 5}). 
    \end{equation} 
    Observe that under our assumptions the sections in $H^0(\Gr(2,V_5),\oo(1))$ defining $S_{20}$  are linearly independent. By generic smoothness, the fibres are smooth over an open subset $U\subset\Pp(V_2')$.  In particular, the closed fibres of $\pi_1$ over $U$ are smooth transverse intersections of $\Gr(2,V_5)$ with a five dimensional linear subspace of $\Pp(\bigwedge^2 V_5)$, hence one-dimensional.  By adjunction formula, we have $$K_{(S_{20})_s}\cong (\oo_{\Gr(2,V_5)}(-5)\otimes\oo_{\Gr(2,V_5)}(5))|_{(S_{20})_s}\cong\oo_{(S_{20})_s},$$
    so they are genus-one curves. Lastly, the description of $\operatorname{NS}(S_{20})$ in \eqref{eqNS} shows that the multisection index of $S_{20}$ is 5.
\end{proof}
\begin{remark}
 The genus one curve appearing in \eqref{eqfib} is called an elliptic     quintic; namely, an elliptic quintic is a smooth projective genus one curve which admits a line bundle of degree ﬁve. Moreover, every elliptic quintic is obtained as the transverse intersection of $\Gr(2,V_5)$ with a codimension 5 subaspace of $\bigwedge^2 V_5$ (see \cite[Lemma 2.6]{ShinderEq20}).
\end{remark}
\bigskip
Let $\varphi:\mathcal{E}\rightarrow\mathcal{F}$ be a morphism of vector bundles over a base scheme $B$, and let $r=\min\{\rk\mathcal{E},\rk\mathcal{F}\}$. We define the $k$-th degeneracy locus of $\varphi$ to be the closed subscheme of $B$ whose ideal locally generated by the $(r+1-k)\times(r+1-k)$ minors of the matrix of $\varphi$. We denote it by $D_k(\varphi).$\\
Consider the projectivization $p:\Pp_B(\cal E)\rightarrow B$. By \cite[Lemma 2.1]{kuznetsov2016kuchle}, $\varphi$ gives a global section of the vector bundle $p^*\cal F\otimes\oo_{\Pp_B(\cal E)}(1)$.
\begin{proposition}\label{propS20}
 The surface $S$ in \eqref{eqGMs} is isomorphic to a surface of type $S_{20}$.
\end{proposition}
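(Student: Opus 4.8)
The plan is to realise $S$ as a degeneracy locus supported on $W_5$ and then to spread that locus out over $\Pp^1$ by the standard projective-bundle presentation of degeneracy loci, recognising the result as a surface of type $S_{20}$. First I would note that $S$ actually lies on $W_5$: since the cone $\mathsf{C}_{\Pp(D^\vee)}(\Pp(V_2^\vee)\times\Pp(V_2'^\vee))$ spans $\Pp(\Sigma^\perp)$, one has
$$
S=\Gr(2,V_5)\cap \mathsf{C}_{\Pp(D^\vee)}(\Pp(V_2^\vee)\times\Pp(V_2'^\vee))\subseteq \Gr(2,V_5)\cap\Pp(\Sigma^\perp)=W_5,
$$
hence $S=W_5\cap \mathsf{C}_{\Pp(D^\vee)}(\Pp(V_2^\vee)\times\Pp(V_2'^\vee))$. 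Intrinsically the cone is the set of $[\omega]\in\Pp(\Sigma^\perp)$ whose image in $\Pp(\Sigma^\perp/D^\vee)=\Pp(V_2^\vee\otimes V_2'^\vee)$ lies on the Segre quadric, so it is cut out by the quadratic form $q=\det\circ\,\pi$, where $\pi\colon\Sigma^\perp\twoheadrightarrow V_2^\vee\otimes V_2'^\vee$ is the quotient with kernel $D^\vee$ (equivalently the transpose of the embedding $V_2\otimes V_2'\hookrightarrow\bigwedge^2V_5^\vee/\Sigma\cong H^0(W_5,\oo_{W_5}(1))$ defined by $\sigma$, using that $H^0(\Gr(2,V_5),\oo(1))\twoheadrightarrow H^0(W_5,\oo_{W_5}(1))$ has kernel $\Sigma$) and $\det$ is the determinant of $V_2^\vee\otimes V_2'^\vee=\mathrm{Hom}(V_2,V_2'^\vee)$. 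Composing the tautological inclusion $\oo_{W_5}(-1)\hookrightarrow\Sigma^\perp\otimes\oo_{W_5}$ with $\pi$ produces a morphism of rank-two vector bundles
$$
\phi\colon V_2\otimes\oo_{W_5}\longrightarrow V_2'^\vee\otimes\oo_{W_5}(1),
$$
corresponding to $\sigma$ under $H^0(\Pp(V_2)\times\Pp(V_2')\times W_5,\oo(1,1,1))\cong V_2^\vee\otimes V_2'^\vee\otimes H^0(W_5,\oo_{W_5}(1))$, and by construction $S=D_1(\phi)$ is the zero locus of $\det\phi$.

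Next I would apply \cite[Lemma 2.1]{kuznetsov2016kuchle} to $\phi$ on $p\colon\Pp_{W_5}(V_2\otimes\oo_{W_5})=\Pp(V_2)\times W_5\to W_5$: the morphism $\phi$ determines a global section $\Psi$ of $p^{*}(V_2'^\vee\otimes\oo_{W_5}(1))\otimes\oo_p(1)\cong V_2'^\vee\otimes\oo(1,1)$, i.e.\ (after a choice of basis of $V_2'$) a section of $\oo(1,1)^{\oplus2}$, whose zero locus is
$$
\mathcal Z(\Psi)=\bigl\{(\ell,x)\in\Pp(V_2)\times W_5\ :\ \ell\subseteq\ker\phi_x\bigr\},
$$
$\ell$ being regarded as a line in $V_2$. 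Under the canonical chain of isomorphisms $H^0(\Pp(V_2)\times W_5,\oo(1,1)^{\oplus2})\cong V_2^\vee\otimes V_2'^\vee\otimes H^0(W_5,\oo_{W_5}(1))\cong H^0(\Pp(V_2)\times\Pp(V_2')\times W_5,\oo(1,1,1))$ the section $\Psi$ is carried to $\sigma$; since $\sigma$ is general, $\mathcal Z(\Psi)$ is therefore a surface of type $S_{20}$, in particular a smooth connected K3 surface.

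It remains to compare $\mathcal Z(\Psi)$ with $S$ via $p$. The fibre of $p|_{\mathcal Z(\Psi)}\colon\mathcal Z(\Psi)\to W_5$ over $x$ equals $\Pp(\ker\phi_x)$: empty if $\rk\phi_x=2$, a reduced point if $\rk\phi_x=1$, and a $\Pp^1$ if $\phi_x=0$. For general $\sigma$ the plane $\Pp(D^\vee)$ is a general $\Pp^2$ in $\Pp(\Sigma^\perp)=\Pp^6$, hence disjoint from the codimension-three subvariety $W_5$; thus $W_5\cap\Pp(D^\vee)=\emptyset$, equivalently $\phi$ vanishes nowhere. Therefore $p$ maps $\mathcal Z(\Psi)$ bijectively onto the rank-$\le1$ locus $D_1(\phi)=S$, and, $\mathcal Z(\Psi)$ being reduced, this bijection is a morphism $g\colon\mathcal Z(\Psi)\to S$. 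Both source and target are smooth projective surfaces over the algebraically closed field $k$ of characteristic zero (indeed K3 surfaces), so the bijective morphism $g$ is an isomorphism and $S\cong\mathcal Z(\Psi)$ is of type $S_{20}$. I expect the main obstacle to be the bookkeeping in the first two steps --- verifying that the quadric cutting $S$ out of $W_5$ is precisely $\det\phi$, and that the section returned by Kuznetsov's lemma is genuinely (the image of) the general section $\sigma$ rather than a twist or a non-generic subfamily; granting that, the general-position input $W_5\cap\Pp(D^\vee)=\emptyset$ upgrades the tautological set-theoretic bijection to an isomorphism, and the residual scheme-theoretic points are routine.
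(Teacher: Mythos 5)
Your proposal is correct and follows essentially the same route as the paper: both realise $S$ as the first degeneracy locus of a morphism of rank-two bundles on $W_5$ and identify it, via \cite[Lemma 2.1]{kuznetsov2016kuchle}, with the zero locus of a general section of $\oo(1,1)^{\oplus 2}$ on $\Pp^1\times W_5$. The only difference is that where the paper cites \cite[Lemma 3.2]{fatighenti2024geometry} (after noting $D_2(\varphi)=\emptyset$ for dimension reasons), you reprove that lemma by hand — checking $W_5\cap\Pp(D^\vee)=\emptyset$ and invoking Zariski's main theorem — which is a valid, slightly more self-contained version of the same argument.
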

\begin{proof}
As mentioned before, by Lemma 2.1 in \cite{kuznetsov2016kuchle}, the choice of a global section $$\varphi\in H^0(\Pp(V_2'^\vee)\times W_5,V_2^\vee\otimes\oo(1,1))=H^0(\Pp_{W_5}(V_2'^\vee\otimes\oo_{W_5}),V_2^\vee\otimes\oo(1,1))$$
corresponds to the morphism of vector bundles over $W_5$
    $$\varphi:V_2'^{\vee}\otimes\oo_{W_5}\rightarrow V_2^\vee\otimes\oo_{W_5}(1).$$
Next, observe that if the section $\varphi$ is chosen to be general, the second degeneracy locus $D_2(\varphi)$ of $\varphi$ is empty for dimensional reasons. Indeed, with such hypotheses, asking for the morphism $\varphi$ to have rank 0 corresponds to imposing four independent linear conditions on the threefold $W_5$. As a result, we conclude by \cite[Lemma 3.2]{fatighenti2024geometry} that there is an isomorphism between the first degeneracy locus, whose ideal sheaf is locally generated by $(\det(\varphi))$ and a surface of type $S_{20}$. Namely, we have an isomorphism between $$D_1(\varphi)=\Gr(2,V_5)\cap\mathsf{C_{\Pp(D^\vee)}}(\Pp(V_2^\vee)\times\Pp(V_2'^\vee))=S,$$ and $S_{20}=\mathcal{Z}(\Pp(V_2'^\vee)\times W_5,V_2^\vee\otimes\oo(1,1))\subset\Pp_{W_5}(V_2'^\vee\otimes\oo_{W_5})=\Pp(V_2'^\vee)\times W_5$. 
\end{proof}
\begin{remark}
    In principle, in the proof of the last Lemma one needs to be a bit careful. In fact, both \cite[Lemma 2.1]{kuznetsov2016kuchle} and \cite[Lemma 3.2]{fatighenti2024geometry}, are stated for a morphism of vector bundles $\varphi:\mathcal{E}\rightarrow\mathcal{F}$ for which $\rk\mathcal{E}\geq\rk\mathcal{F}+1$. However, the same proof applies if $\rk\mathcal{E}=\rk\mathcal{F}=2$.
\end{remark}
Arguing as in the previous Proposition, it can be showed that $$T\cong \cal Z(\Pp^1\times(\Gr(2,V_5^\vee)\cap \Pp((D^\vee)^\perp)),\oo(1,1)^{\oplus 2}),$$
and so, $T$ comes as an elliptic K3 surface whose fiber over a general point $s\in\Pp^1$ is given by
\begin{equation}\label{eqfibdual}
T_s=\cal Z\left(\Gr(2,V_5^\vee),\oo_{\Gr(2,V_5^\vee)}(1)^{\oplus 5}\right).
\end{equation}
\subsubsection{Elliptic K3 surfaces of Picard rank 2 and Jacobians}
\begin{proposition}{\cite[Remark 4.2]{van2005some}\cite[Proposition 3.1]{MeiShi24}}
    Let $X$ be an elliptic K3 surface of Picard rank 2, and let $F\in\operatorname{NS}(X)$ be the class of a fibre. Then there exists a polarization $H$ on $X$ such that $H,\,F$ form a basis of $\operatorname{NS}(X)$ and $H\cdot F=t$. The Néron--Severi lattice of $X$ is given by a matrix of the form $$\operatorname{NS}(X)\cong \begin{pmatrix}
2d & t \\
t & 0
\end{pmatrix}$$
\end{proposition}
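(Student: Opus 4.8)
The plan is to read off the Gram matrix directly from the geometry of the elliptic fibration $f\colon X\to\Pp^1$ together with standard lattice theory of K3 surfaces; the only genuinely delicate point will be to arrange that the class completing $F$ to a basis can be taken to be a polarization.

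First I would record the basic properties of the fibre class. Since $K_X=\oo_X$ and a general fibre is a smooth genus one curve, adjunction gives $F^2=0$, and since $F$ is the pullback of $\oo_{\Pp^1}(1)$ it is base-point free, hence nef. I also need that $F$ is \emph{primitive} in $\operatorname{NS}(X)$; for this I would invoke the standard fact that an elliptic fibration on a K3 surface has no multiple fibres (immediate from the canonical bundle formula together with $K_X=\oo_X$), which forces the class of a fibre to be primitive. As $\rk\operatorname{NS}(X)=2$, I may then complete $F$ to a $\Z$-basis $\{F,E\}$ of $\operatorname{NS}(X)$. By the Hodge index theorem $\operatorname{NS}(X)$ is nondegenerate, so $E\cdot F\neq0$ (otherwise $F$, pairing trivially with both $E$ and $F$, would lie in the radical); after replacing $E$ by $-E$ if necessary, set $t:=E\cdot F>0$. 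Since $F\cdot F=0$ and $F\cdot E=t$, the subgroup $F\cdot\operatorname{NS}(X)\subseteq\Z$ equals $t\Z$, so its least positive element $t$ is precisely the multisection index of $f$; in particular $\operatorname{NS}(X)$ is hyperbolic, $X$ is projective, and polarizations exist.

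The main step — soft rather than deep — is to trade $E$ for an ample class without destroying the basis property. Fix any polarization $A$ and write $A=\alpha E+\beta F$ with $\alpha,\beta\in\Z$. Since $A$ is ample and $F$ is a nonzero effective class, $0<A\cdot F=\alpha t$, so $\alpha\ge1$. As $F$ is nef, $A+\mu F$ is ample for every integer $\mu\ge0$; taking $\mu=\alpha m-\beta$ for any integer $m\ge\beta/\alpha$ yields $A+(\alpha m-\beta)F=\alpha(E+mF)$, so $H:=E+mF$ is a polarization. Moreover $\{F,H\}$ is again a $\Z$-basis of $\operatorname{NS}(X)$ and $H\cdot F=t$. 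Finally $H^2=E^2+2mt$ is even, because $\operatorname{NS}(X)\subseteq H^2(X,\Z)$ is an even lattice; writing $H^2=2d$, the Gram matrix of $\operatorname{NS}(X)$ in the basis $(H,F)$ is $$\begin{pmatrix}2d & t\\ t & 0\end{pmatrix},$$ which is the asserted form. (If one wishes, replacing $m$ by $m+1$ shifts $d$ by $t$, so one may further normalise $0\le d<t$.)
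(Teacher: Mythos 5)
Your argument is correct, and it is the standard proof of this fact: the paper itself does not prove the proposition but imports it by citation from van Geemen and Meinsma--Shinder, whose arguments run along exactly the lines you give (primitive isotropic fibre class, complete to a basis, add a large multiple of the nef class $F$ to reach the ample cone, use evenness of the K3 lattice). The only step stated a little tersely is the deduction of primitivity of $F$ from the absence of multiple fibres, but that implication is standard (via Riemann--Roch and Zariski's lemma on divisors supported in fibres) and your appeal to it is legitimate.
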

Denote $\Lambda_{d,t}$ for the lattice of rank 2 given by the matrix above. It has exactly two isotropic primitive vectors up to sign: the class $F$ and $$F'=\frac{1}{\gcd(d,t)}(tH-dF).$$
In many cases, the class $F'$ gives another elliptic fibration.
\begin{lemma}{\cite[Section 4.7]{van2005some}}
    Let $X$ be a very general K3 surface of Picard rank 2, and Néron--Severi lattice $\Lambda_{d,t}$. If $t>2$ and $d\not\equiv-1\mod t$ then $F'$ gives a second elliptic fibration which is isomorphic (as an elliptic surface) to that defined by $F$ if and only if $d\equiv 1\mod t$.
\end{lemma}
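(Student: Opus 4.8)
The plan is to analyze when two elliptic fibrations on a very general K3 surface $X$ with $\operatorname{NS}(X) \cong \Lambda_{d,t}$, defined by the primitive isotropic classes $F$ and $F'$, are isomorphic as elliptic surfaces. The key tool is that an isomorphism of elliptic surfaces induces an isometry of the Néron--Severi lattice preserving the class of the fibre (together with the ample cone), and conversely for a very general such $X$ every Hodge isometry of $\operatorname{NS}(X)$ that preserves the ample cone lifts to an automorphism of $X$ by the Torelli theorem (this uses transcendentality: for very general $X$ the transcendental lattice carries no nontrivial Hodge isometries other than $\pm 1$, so the only constraint is on $\operatorname{NS}(X)$). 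Thus the two fibrations are isomorphic as elliptic surfaces if and only if there is a lattice isometry of $\Lambda_{d,t}$, compatible with the positive/ample cone, sending $F$ to $F'$.

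First I would parametrize $O(\Lambda_{d,t})$. Writing vectors in the basis $(H,F)$ with Gram matrix $\left(\begin{smallmatrix} 2d & t \\ t & 0\end{smallmatrix}\right)$, a direct computation shows the isometry group is generated by $-\mathrm{id}$ together with the ``shift'' isometries $H \mapsto H + kF$, $F \mapsto F$ for appropriate $k$ (these are the only ones fixing the isotropic class $F$), and the ``swap'' that interchanges the two rays of isotropic vectors. Since $F' = \frac{1}{\gcd(d,t)}(tH - dF)$, I would compute the image of $F$ under a general isometry and ask when it equals $\pm F'$ up to the shift ambiguity; the congruence conditions $t > 2$ and $d \not\equiv -1 \bmod t$ in the hypothesis are exactly what guarantees that $F'$ is itself a genuine fibre class defining an elliptic (not quasi-elliptic, and with a section-giving structure) fibration, as in the cited lemma of van Geemen. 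The condition $d \equiv 1 \bmod t$ should emerge as the precise arithmetic criterion for the existence of an isometry taking the ample cone to itself while swapping $F$ and $F'$: roughly, one needs $tH - dF$ and $F$ to be related by an element of the orthogonal group that is ``effective-cone preserving'', and the obstruction is measured by $d \bmod t$.

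The main obstacle I expect is the careful bookkeeping of \emph{which} lattice isometries are realized by automorphisms of $X$ as an abstract surface versus as an elliptic surface, and in particular tracking the ample/effective cone so that one does not over- or under-count. Concretely: every isometry of $\operatorname{NS}(X)$ is, up to sign and up to composing with the Weyl group generated by $(-2)$-curves, realized by an automorphism — but $\Lambda_{d,t}$ with $d \geq 1$ may or may not contain $(-2)$-classes, so I would first check (for the relevant $d,t$) that the only reflections available are trivial, reducing the realizability question to $\{\pm 1\} \times (\text{shifts, swap})$ acting on the cone. Then the ``if'' direction amounts to exhibiting, when $d \equiv 1 \bmod t$, an explicit isometry $H \mapsto tH - dF - (\text{correction})$, $F \mapsto$ (the other isotropic ray) that is cone-preserving; the ``only if'' direction amounts to showing that when $d \not\equiv 1 \bmod t$ no such cone-preserving isometry exists, i.e.\ any lattice isometry swapping the two isotropic rays necessarily reverses the ample cone and hence is not induced by an isomorphism of elliptic surfaces. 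I would also need to invoke that ``very general'' ensures $\operatorname{Aut}(X)$ acts on $\operatorname{NS}(X)$ through all of the cone-preserving isometry subgroup (no transcendental obstruction), which is standard but worth stating. The calculations themselves (solving the relevant linear Diophantine system mod $t$) are routine once the framework is set up, so the real content is the reduction to pure lattice theory and the correct identification of the cone constraint.
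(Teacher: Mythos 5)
The paper does not actually prove this lemma (it is quoted from van Geemen), so your argument has to stand on its own. The framework you choose --- reduce, via the Torelli theorem and the fact that a very general $X$ admits no Hodge isometries of its transcendental lattice other than $\pm\operatorname{id}$, to a question about isometries of $\Lambda_{d,t}$ carrying $F$ to $F'$ --- is the right one, but your reduction to \emph{cone-preserving isometries of $\operatorname{NS}(X)$} loses the essential arithmetic. An isometry $\sigma$ of $\operatorname{NS}(X)$ is induced by an automorphism of $X$ only if it extends to a Hodge isometry of all of $H^2(X,\mathbb Z)$, i.e.\ only if it glues with $\pm\operatorname{id}$ on $T(X)$; by Nikulin's theory this requires $\sigma$ to act as $\pm\operatorname{id}$ on the discriminant group $A_{\operatorname{NS}(X)}$. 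That gluing condition, not the ample cone, is where $d\equiv\pm 1\pmod t$ comes from. Indeed, a lattice isometry of $\Lambda_{d,t}$ sending $F$ to $F'$ exists under the strictly weaker condition $d^2\equiv 1\pmod t$ (for $\gcd(d,t)=1$ it is $H\mapsto dH+\tfrac{1-d^2}{t}F$, $F\mapsto tH-dF$), and any such isometry automatically preserves the positive-cone component because $F\cdot F'=t^2/\gcd(d,t)>0$; so your proposed mechanism for the ``only if'' direction --- that when $d\not\equiv 1\pmod t$ every swapping isometry reverses the ample cone --- is false. Concretely, for $t=8$, $d=3$ the isometry $H\mapsto 3H-F$, $F\mapsto 8H-3F$ is cone-preserving and swaps $F$ with $F'$, yet it multiplies the class of $F/8$ in $A_{\operatorname{NS}(X)}$ by $-3\not\equiv\pm 1\pmod 8$, hence does not extend; the two fibrations on a very general such $X$ are not isomorphic, whereas your criterion would conclude that they are.

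Two smaller points. First, the maps $H\mapsto H+kF$, $F\mapsto F$ are not isometries of $\Lambda_{d,t}$ for $k\neq 0$ (they change $H^2$ by $2kt$); the stabilizer of $F$ in $O(\Lambda_{d,t})$ is trivial, so your description of the isometry group needs redoing. Second, the hypothesis $d\not\equiv -1\pmod t$ plays two roles worth separating: the equation $(aH+bF)^2=-2$ is solvable in $\Lambda_{d,t}$ exactly when $d\equiv -1\pmod t$, so the hypothesis guarantees there are no $(-2)$-curves, the ample cone equals the positive cone, and $F'$ is nef and defines a genuine elliptic fibration; and it excludes the case in which the swapping isometry acts as $+\operatorname{id}$ on the discriminant group (gluing with $+\operatorname{id}$ on $T(X)$), in which case the two fibrations would again be isomorphic and the stated ``if and only if $d\equiv 1$'' would fail. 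Once the discriminant-group condition is inserted, your outline does yield the lemma: the swap extends if and only if $-d\equiv\pm 1\pmod t$, which under the standing hypotheses means $d\equiv 1\pmod t$.
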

Given any smooth projective curve $C$, its $k$-th Jacobian $\operatorname{Jac}^k(C)$ is defined as the moduli space of degree $k$ line bundles on $C$. A similar construction applies to elliptic surfaces, motivating the following. 
\begin{definition}
Let $X\rightarrow C$ be an elliptic surface. For every $k\in\mathbb Z$, the $k$-th Jacobian of $X$, denoted $\operatorname{Jac}^k(X/C)$, is another elliptic surface over $C$ defined as the unique minimal regular model with the generic fibre given by $\operatorname{Jac}^k(X_{k(C)})$.
\end{definition}
For a full treatment of Jacobians of elliptic surfaces we refer to \cite[Chapter 2]{Dolgachev2011}.\\
Let $t$ be the multisection index of $X$. The Jacobians for which $\gcd(d,t)=1$ are called \emph{coprime Jacobians} of $X$. The state of the art regarding Fourier--Mukai partners of elliptic K3 surfaces is the following result by Meinsma and Shinder.
\begin{theorem}{\cite[Theorem 1.2]{MeiShi24}}\label{thmms}
Let $X$ be an elliptic K3 surface of Picard rank 2. Let $t$ be the multisection index of $X$ and let $2d$ be the degree of a polarization on $X$. Denote $m=\operatorname{gcd}(d,t)$. 
\begin{enumerate}
    \item[(i)] If $m=1$, then 
    every Fourier--Mukai partner of $X$
    is isomorphic to a coprime Jacobian
    of a fixed elliptic fibration on $X$;
    
    \item[(ii)] If $m=p^k$, for a prime $p$, then every Fourier--Mukai
    partner of $X$ is isomorphic
    to a coprime Jacobian
    of one of the two elliptic fibrations on $X$;
    
    \item[(iii)] If $m$ is not a power of a prime, and $X$ is very general, then $X$ admits Fourier--Mukai partners which are not isomorphic to any Jacobian of any elliptic fibration on $X$.
\end{enumerate}
\end{theorem}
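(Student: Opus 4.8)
The plan is to deduce the trichotomy from the derived global Torelli theorem for K3 surfaces, combined with the realization of Fourier--Mukai partners of an elliptic K3 surface as relative moduli of sheaves (coprime Jacobians), and then to reduce everything to an arithmetic analysis of the discriminant form of the transcendental lattice $T_X$.

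\textbf{Step 1: Reduction to lattice theory.} By derived global Torelli (Mukai, Orlov), $Y$ is a Fourier--Mukai partner of $X$ exactly when $T_Y\cong T_X$ as Hodge lattices, and the Hosono--Lian--Oguiso--Yau counting formula then identifies $\mathrm{FM}(X)$ with a double-coset set built from the genus of $T_X$ and the orthogonal group of its discriminant form $q_{T_X}\cong -q_{\operatorname{NS}(X)}$. Since $\operatorname{rk} T_X = 22-\rho = 20\ge 3$ with signature $(2,18)$, every isometry class in the genus is realized by a projective K3 surface (Nikulin), and — the genericity in (iii) ensuring $\operatorname{NS}(X)=\Lambda_{d,t}$ exactly — the count depends only on the discriminant form of $\Lambda_{d,t}$, whose only nontrivial $\ell$-adic pieces sit at primes $\ell\mid t$, with a distinguished extra contribution at the primes dividing $m=\gcd(d,t)$. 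The three cases of the theorem correspond precisely to $m=1$, $m$ a prime power, and $m$ divisible by two distinct primes.

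\textbf{Step 2: Coprime Jacobians as partners.} For the elliptic fibration $\pi\colon X\to\Pp^1$ with fibre class $F$ and an integer $k$ coprime to $t$, the relative compactified Jacobian $\operatorname{Jac}^k(X/\Pp^1)$ is the moduli space $M_H(0,F,k)$; because $t$ is the multisection index, $\gcd(k,t)=1$ is exactly the fineness condition, so by Mukai and Bridgeland--Maciocia this is a K3 surface, a Fourier--Mukai partner of $X$, again elliptic of multisection index $t$, with $\operatorname{NS}$ of the form $\Lambda_{d',t}$ for an explicitly computable $d'$. Twisting by a $t$-section and composing with the $(-1)$-isogeny show that, for very general $X$, the isomorphism class of $\operatorname{Jac}^k(X/\Pp^1)$ depends on $k$ only through a residue modulo $t$ and a sign, so these Jacobians sweep out a definite subset $\mathcal{J}_\pi\subseteq \mathrm{FM}(X)$; the same construction applied to the second primitive isotropic class $F'=\tfrac1m(tH-dF)$ produces $\mathcal{J}_{\pi'}$, and the lemmas recalled above show these are the only two elliptic fibrations on $X$. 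Translating into discriminant-form data, one reads off which gluing classes in the genus of $T_X$ are hit by $\mathcal{J}_\pi$ and which by $\mathcal{J}_{\pi'}$.

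\textbf{Step 3: Matching, and the main obstacle.} If $m=1$, the discriminant form has a single local type at every $\ell\mid t$ and the residues produced by coprime $k$ exhaust the relevant double cosets, so $\mathcal{J}_\pi=\mathrm{FM}(X)$, which is (i); if $m=p^k$, there is one extra local piece at $p$ with two admissible normalizations, realized respectively by the Jacobians of $\pi$ and of $\pi'$, so $\mathcal{J}_\pi\cup\mathcal{J}_{\pi'}=\mathrm{FM}(X)$, which is (ii); and if $m$ has two distinct prime factors $p,q$, the genus of $T_X$ contains classes whose local pieces at $p$ and at $q$ are normalized independently, whereas any single $\operatorname{Jac}^k$ changes the normalization coherently at all primes dividing $m$ at once, so — there being only the two fibrations — no Jacobian realizes such a ``mixed'' class, while for very general $X$ the corresponding K3 surfaces are genuine Fourier--Mukai partners not isomorphic to any Jacobian, which is (iii). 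The heart of the argument, and what I expect to be the main obstacle, is precisely this last matching: the $p$-adic classification of $q_{T_X}$ as a function of the factorization of $m$, the control of the stable orthogonal group acting on it, and the verification in case (iii) that the mixed classes genuinely fall outside $\mathcal{J}_\pi\cup\mathcal{J}_{\pi'}$; a mass-formula computation for the genus of $T_X$ can be used to make all three counts explicit. The geometric ingredients (derived Torelli, the moduli description of coprime Jacobians, the structure of Jacobian elliptic surfaces) are essentially formal by comparison.
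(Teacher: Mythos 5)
This statement is quoted verbatim from \cite[Theorem 1.2]{MeiShi24}; the paper gives no proof of it (it is an external input), so there is no internal argument to compare against. That said, your outline does track the actual strategy of Meinsma--Shinder: derived global Torelli plus the Hosono--Lian--Oguiso--Yau counting formula reduces the classification of Fourier--Mukai partners of $X$ to arithmetic of the discriminant form of $T_X\cong\Lambda_{d,t}^{\perp}$, the coprime Jacobians are realized as fine relative moduli spaces $M_H(0,F,k)$ with $\gcd(k,t)=1$ and are therefore themselves Fourier--Mukai partners, and the trichotomy in $m=\gcd(d,t)$ is a Chinese-remainder phenomenon governing which classes those Jacobians hit. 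So the architecture is right.

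As written, however, this is a plan rather than a proof: the entire mathematical content of the theorem sits in the step you yourself label ``the main obstacle,'' and it is not carried out. Concretely: (a) you never compute $\operatorname{NS}(\jac^k(X/\Pp^1))$ --- it is of the form $\Lambda_{d',t}$ with $d'$ a quadratic twist of $d$ by the residue of $k$ modulo $t$, and without this explicit $d'$ the assertion that the Jacobians ``sweep out'' a definite subset of $\mathrm{FM}(X)$ has no content; (b) the claims that the genus of $T_X$ decomposes into ``independent local normalizations'' at the primes dividing $m$, and that a single $\jac^k$ ``changes the normalization coherently at all primes dividing $m$ at once,'' are precisely what must be proved --- this is where the hypotheses $m=1$, $m=p^k$, and $m$ divisible by two primes actually enter, and where the exhaustiveness in (i)--(ii) and the existence of a genuinely missed class in (iii) are decided; (c) statement (iii) excludes \emph{every} Jacobian of every elliptic fibration, including the non-coprime ones $\jac^k$ with $\gcd(k,t)>1$, which you do not address (one must check, e.g.\ via multisection index or transcendental lattice, that these cannot accidentally coincide with the exhibited Fourier--Mukai partner). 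Until (a) and (b) are made explicit, the three cases are assertions rather than conclusions, so the proposal should be regarded as an accurate roadmap to the Meinsma--Shinder proof with its core left open.
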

\begin{proposition}{\cite[Proposition 3.10(3)]{ShinderEq20}}\label{propshinder0}
    Let $S$ be an elliptic K3 surface of Picard rank 2 and $\operatorname{NS}(S)\cong U(5) $, and let $S':=\jac^2(S/\Pp^1)$. If $S$ is very general, then $S\not\cong S'$.
\end{proposition}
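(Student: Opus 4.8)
The plan is to combine the Torelli theorem for K3 surfaces with the theory of Jacobian fibrations; the Hosono--Lian--Oguiso--Yau count of Fourier--Mukai partners (or Theorem~\ref{thmms}) will provide an alternative route. First recall the structure of $S$: since $\NS(S)\cong U(5)$, this lattice has no $(-2)$-vectors and exactly two primitive isotropic classes up to sign, say $F_1,F_2$, with $F_1\cdot F_2=5$. Hence $S$ carries exactly two elliptic fibrations $\pi_i\colon S\to\Pp^1$ ($i=1,2$), with $F_i$ the class of a fibre of $\pi_i$; both have multisection index $5$; the class $F_1+F_2$ is ample; and, as there are no $(-2)$-curves on $S$, the ample cone of $S$ is the component of the positive cone containing $F_1+F_2$. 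Note also that $S':=\jac^2(S/\pi_1)$ is a K3 surface Fourier--Mukai equivalent to $S$: its fibres are integral genus-one curves and, since $\gcd(2,5)=1$, it is the fine moduli space of $\pi_1$-fibrewise rank-one degree-two sheaves on $S$, i.e. the moduli space attached to the primitive Mukai vector $(0,F_1,2)$, which has square zero and divisibility one; in particular $D^b(S')\simeq D^b(S)$, and $\NS(S')\cong U(5)$ because $U(5)$ is alone in its genus, so $S'$ too carries exactly two elliptic fibrations.

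Next I would show the two fibrations of $S$ are interchanged by an automorphism. The involution of $U(5)$ swapping $F_1$ and $F_2$ fixes the ample class $F_1+F_2$; since $O(T(S))\to O(q_{T(S)})$ is surjective (the transcendental lattice of $S$ has rank $20$ and signature $(2,18)$), this involution extends to a Hodge isometry of $H^2(S,\Z)$, and as $S$ has no $(-2)$-curves the extension preserves the ample cone. By the Torelli theorem it is induced by an automorphism $\iota$ of $S$ with $\iota^{*}F_1=F_2$, so that $\iota$ identifies $(S,\pi_1)$ with $(S,\pi_2)$ as elliptic surfaces over $\Pp^1$.

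Now suppose for contradiction that $f\colon S\xrightarrow{\sim}S'$. Then $f^{*}$ sends the fibre class of the natural fibration of $S'$ to one of $F_1,F_2$ (primitive isotropic classes map to primitive isotropic classes and nef classes to nef classes, which pins down the sign); since a fibre class determines its fibration up to $\mathrm{Aut}(\Pp^1)$, after composing $f$ with $\iota$ if necessary we obtain an isomorphism of elliptic surfaces over $\Pp^1$ between $S'=\jac^2(S/\pi_1)$ and $(S,\pi_1)=\jac^1(S/\pi_1)$. But an isomorphism over $\Pp^1$ between $\jac^a(S/\pi_1)$ and $\jac^b(S/\pi_1)$ induces an automorphism over $\Pp^1$ of their common Jacobian fibration $J:=\jac^0(S/\pi_1)$ carrying the class of one torsor to that of the other in $\mathrm{Sha}(J/\Pp^1)$. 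For very general $S$ the generic fibre of $\pi_1$ has period equal to its index --- the multisection index $5$ --- so $[S]$ has order $5$ in $\mathrm{Sha}(J/\Pp^1)$, while $J/\Pp^1$ has trivial Mordell--Weil group and no automorphisms over $\Pp^1$ other than the identity and $[-1]$; hence $\jac^a(S/\pi_1)\cong\jac^b(S/\pi_1)$ over $\Pp^1$ forces $a\equiv\pm b\pmod 5$. Since $2\not\equiv\pm1\pmod 5$, this is absurd, and therefore $S\not\cong S'$.

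I expect the last rigidity input --- that for very general $S$ the Jacobian fibration $J/\Pp^1$ has trivial Mordell--Weil group and only the automorphisms $\pm\mathrm{id}$ over $\Pp^1$, so that the five torsors $\jac^a(S/\pi_1)$ are pairwise non-isomorphic except for $\jac^a\cong\jac^{-a}$ --- to be the main obstacle, since it is exactly what rules out accidental identifications among them. Alternatively one can run the argument through Theorem~\ref{thmms}: here $m=\gcd(d,t)=\gcd(5,5)=5$ is a prime power, so every Fourier--Mukai partner of $S$ is a coprime Jacobian of $\pi_1$ or $\pi_2$ and hence, by the second paragraph, lies in $\{S,S'\}$; combining this with the Hosono--Lian--Oguiso--Yau count one reduces to showing that $\mathrm{FM}(S)$ has exactly two elements, equivalently that the primitive isotropic vectors of divisibility one in the Mukai lattice $\widetilde{\NS}(S)\cong U\oplus U(5)$ --- which form a single orbit under all of $O(\widetilde{\NS}(S))$ --- split into two orbits under the Hodge-isometry subgroup, and there the crux is an arithmetic statement about the discriminant form $q_{U(5)}$ that hinges on the primality of $5$.
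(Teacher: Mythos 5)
The paper gives no proof of this statement at all --- it is quoted directly from \cite[Proposition 3.10(3)]{ShinderEq20} --- so your attempt has to stand on its own, and unfortunately it does not: the second paragraph contains a concrete error on which the rest of the argument hinges. You claim the involution of $\operatorname{NS}(S)\cong U(5)$ exchanging the two primitive isotropic classes $F_1,F_2$ extends to a Hodge isometry of $H^2(S,\Z)$ and hence, via Torelli, comes from an automorphism $\iota$ of $S$. Surjectivity of $O(T(S))\to O(q_{T(S)})$ is not the relevant point: to glue an isometry of $\operatorname{NS}(S)$ with one of $T(S)$ into a \emph{Hodge} isometry of $H^2(S,\Z)$, you need an isometry of $T(S)$ that simultaneously preserves the period and induces the prescribed action on the discriminant group. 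For very general $S$ the only Hodge isometries of $T(S)$ are $\pm\operatorname{id}$, which act as $\pm\operatorname{id}$ on $A_{T(S)}\cong A_{\operatorname{NS}(S)}\cong(\Z/5)^2$, whereas the swap $F_1\leftrightarrow F_2$ exchanges the two generators of $(\Z/5)^2$ and is not $\pm\operatorname{id}$. So no such $\iota$ exists; indeed its existence would contradict the van Geemen lemma quoted in this paper (Section 4.7 of \cite{van2005some}): here $d=t=5$ and $d\not\equiv 1\bmod t$, so the two fibrations are \emph{not} isomorphic as elliptic surfaces.

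Without $\iota$, the case $f^*F'=F_2$ in your third paragraph cannot be reduced to comparing Jacobians of $\pi_1$ (note $(S,\pi_2)$ is not a coprime Jacobian of $(S,\pi_1)$, since $\gcd(d,t)=5$), so the contradiction is never reached; the same gap undermines the alternative route in your last paragraph, which again invokes ``the second paragraph''. Separately, the rigidity inputs you defer --- trivial Mordell--Weil group, only $\pm\operatorname{id}$ as automorphisms of $J/\Pp^1$, hence $\jac^a(S/\pi_1)\cong\jac^b(S/\pi_1)$ over $\Pp^1$ only if $a\equiv\pm b\pmod 5$ --- are essentially the content of the cited Shinder--Zhang proposition, so the attempt both assumes the hard part and inserts a false intermediate claim. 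The first paragraph (lattice analysis of $U(5)$, the two fibrations, $S'$ as the fine moduli space for the Mukai vector $(0,F_1,2)$) is fine; a correct route along your lines would keep the torsor/period--index idea but treat the two fibrations asymmetrically, or argue via the action of the derived equivalence on the discriminant group of $T(S)$.
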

Though not immediately related, the following lemma will be necessary later.
\begin{lemma}{\cite[Proposition 2.8]{ShinderEq20}}\label{lmshinder}
        Let $V_5$ be a 5-dimensional vector space and let $L\subset\bigwedge^2(V_5^\vee)$ be a 5-dimensional subspace.Let \begin{align}
         C&:=\Gr(2,V_5)\cap\Pp(L^\perp)\label{curve}\\
         C'&:=\Gr(2,V_5^\vee)\cap\Pp(L). \label{dualcurve}
         \end{align}
         If $C$ is a smooth transverse intersection so is $C'$, and we have 
         \begin{equation}\label{eqcurve}
         C'\cong\jac^3(C),\quad C\cong\jac^2(C').
         \end{equation}
\end{lemma}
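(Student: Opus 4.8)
The plan is to treat two things separately: first, transfer smoothness and the curve structure from $C$ to $C'$; then produce the isomorphisms with the Jacobians by realizing the classical duality of elliptic normal quintics through the tautological bundles on $\Gr(2,V_5)$ together with Atiyah's classification of vector bundles on an elliptic curve.

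For smoothness I would use that $\Gr(2,V_5)\subset\Pp(\bigwedge^2V_5)$ and $\Gr(2,V_5^\vee)\subset\Pp(\bigwedge^2V_5^\vee)$ are projectively dual: a $2$-form $\omega\in\bigwedge^2V_5^\vee$ cuts $\Gr(2,V_5)$ in a singular hyperplane section exactly when $\omega$ is degenerate, i.e.\ $[\omega]\in\Gr(2,V_5^\vee)$. By the duality of orthogonal linear sections — equivalently, by Kuznetsov's homological projective duality for $\Gr(2,5)$, whose HP-dual is $\Gr(2,V_5^\vee)$ — the codimension-$5$ section $C=\Gr(2,V_5)\cap\Pp(L^\perp)$ is a smooth transverse intersection iff the orthogonal section $C'=\Gr(2,V_5^\vee)\cap\Pp(L)$ is; since $\dim\Gr(2,V_5)=6$ both are then smooth curves, and adjunction ($K_{\Gr(2,V_5)}=\oo(-5)$, five hyperplanes imposed) gives $K_C\cong\oo_C$, $K_{C'}\cong\oo_{C'}$. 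So both are elliptic, with the degree-$5$ Pl\"ucker polarizations $M=\oo_{\Gr(2,V_5)}(1)|_C$ and $M'=\oo_{\Gr(2,V_5^\vee)}(1)|_{C'}$; in particular $5[C]=0$ and $5[C']=0$ in the relevant Weil--Ch\^atelet groups.

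Next I would restrict the tautological sequence, $0\to\cal U_C\to V_5\otimes\oo_C\to\cal Q_C\to0$, noting that for transverse $C$ the bundle $\cal U_C$ is stable of rank $2$, degree $-5$, with $\det\cal U_C=M^\vee$ and $V_5^\vee\cong H^0(C,\cal U_C^\vee)$. The defining condition of $C'$ translates: a point of $C'$ is a $3$-dimensional $\Pi\subset V_5$ (the annihilator of the chosen $K\subset V_5^\vee$) with $\bigwedge^2K\subset L$, and unwinding the Pl\"ucker pairing — using that $L^\perp\subset\bigwedge^2V_5$ is spanned by the Pl\"ucker points $\bigwedge^2\cal U_{C,p}$, $p\in C$, of the nondegenerate curve $C$ — this is equivalent to $\Pi\cap\cal U_{C,p}\neq0$ for \emph{every} $p\in C$, i.e.\ to the composite $\Pi\otimes\oo_C\to V_5\otimes\oo_C\to\cal Q_C$ being everywhere degenerate. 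Geometrically, $C'$ parametrizes the planes $\Pp^2\subset\Pp(V_5)$ meeting every ruling of the elliptic quintic scroll $R=\bigcup_{p\in C}\Pp(\cal U_{C,p})$. For a general such $\Pi$ the kernel $\cal L:=\ker(\Pi\otimes\oo_C\to\cal Q_C)$ is a line subbundle of $\cal U_C$, and the induced section $C\to\Pp_C(\cal U_C)\to\Pp(V_5)$ maps $C$ onto a plane curve of arithmetic genus one, hence a plane cubic, giving $\deg\cal L=-3$. Conversely, for $N\in\jac^3(C)$ one has $\operatorname{Hom}(\cal U_C^\vee,N)=H^0(\cal U_C\otimes N)$, which is $1$-dimensional by Riemann--Roch and stability; the resulting surjection $\cal U_C^\vee\twoheadrightarrow N$ has kernel of degree $2$, so dually $N^\vee$ is a line subbundle of $\cal U_C$ of degree $-3$, and the unique non-split extension $0\to N^\vee\to\cal E\to M^\vee\otimes N\to0$ it determines ($\operatorname{Ext}^1$ being $1$-dimensional) is a stable rank-$2$ bundle with $\det\cal E=M^\vee$, hence $\cal E\cong\cal U_C$ by Atiyah's uniqueness. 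Feeding $N^\vee\subset\cal U_C$ through the scroll recovers a plane $\Pi_N\in C'$; $N\mapsto\Pi_N$ inverts the previous construction and, being built from the Poincar\'e bundle and the tautological sequence, is a morphism, hence an isomorphism $\jac^3(C)\xrightarrow{\sim}C'$. Since all of this is functorial it globalizes over any base; the mirror argument starting from $C'$, its tautological bundle and its scroll gives $C\cong\jac^2(C')$, where the shift is forced by the round-trip $\jac^2(\jac^3(C))\cong\jac^6(C)\cong\jac^1(C)\cong C$ (itself using $5[C]=0$).

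I expect the main obstacle to be twofold. First, the transversality transfer: stating ``a codimension-$5$ linear section of $\Gr(2,5)$ is smooth iff its orthogonal section is'' in a form precise and uniform enough to also run relatively over a base (as needed downstream), which forces one either to quote HPD carefully or to argue directly with the conormal variety of $\Gr(2,5)$; one should also check the stability of $\cal U_C$ for transverse $C$. Second, the converse half of the bijection: the appeal to Atiyah's theorem — that \emph{every} degree-$3$ line bundle on $C$ occurs as a line subbundle of the \emph{single} bundle $\cal U_C$ — together with the algebraicity of $N\mapsto\Pi_N$ and the verification that the generic $\Pi\in C'$ really cuts $R$ in an honest plane cubic, so that the two maps are mutually inverse. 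The precise degree labels ($\jac^3$ on the $C$ side versus the $\jac^{-3}=\jac^2$ appearing on the $C'$ side) are then just bookkeeping against $5[C]=0$.
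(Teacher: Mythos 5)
The paper does not prove this lemma: it is quoted verbatim from \cite[Proposition 2.8]{ShinderEq20}, so there is no in-paper argument to compare against. Your proposal reconstructs what is essentially the argument of that reference: translate membership in $C'$ into the condition that the $3$-plane $\Pi=K^\perp$ meets every ruling of the scroll $\Pp_C(\mathcal{U}|_C)$, identify such $\Pi$ with degree $-3$ line subbundles of the stable bundle $\mathcal{U}|_C$ via stability and Riemann--Roch, and settle the degree labels using $5[C]=0$. The strategy is correct, and you flag most of the delicate points yourself.

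Three spots still need tightening. First, the transfer of smoothness is \emph{not} a formal consequence of projective duality for arbitrary higher-codimension sections; for $\Gr(2,V_5)$ one must run the incidence-variety computation (non-transversality of $C$ at $[U]$ is detected by a decomposable $\omega=f\wedge g\in L$ with $U\subset\ker f\cap\ker g$, a condition symmetric in $(L,U)\leftrightarrow(L^\perp,K)$) or cite it from \cite{debarre2018gushel}/\cite{ShinderEq20}. Second, ``image of arithmetic genus one, hence a plane cubic'' is not quite an argument (a birational image of an elliptic curve in $\Pp^2$ could a priori be a singular curve of higher degree); the clean route is the two inequalities $\deg\mathcal{L}\le -3$ (stability of $\mathcal{U}|_C$) and $h^0(\mathcal{L}^\vee)\ge 3$ (nondegeneracy of the image in $\Pp(\Pi)$), which pin down $\deg\mathcal{L}=-3$, after which a birational map between the smooth projective curves $C'$ and $\jac^3(C)$ is automatically an isomorphism. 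Third, and most importantly for how the lemma is used in this paper: it is applied to the generic fibre over $k(\Pp^1)$, a non-closed field, so the appeal to Atiyah's classification and to ``the Poincar\'e bundle'' must be replaced by a functor-of-points/descent formulation (the scheme of degree $-3$ line subbundles of $\mathcal{U}|_C$ maps to both $C'$ and $\operatorname{Pic}^{-3}_{C/k}$, and both maps are isomorphisms); the non-split extension detour through Atiyah's uniqueness is unnecessary once one has $h^0(\mathcal{U}|_C\otimes N)=1$.
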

We are ready for the main result of the section.
\begin{theorem}
If the K3 surfaces $S$ and $T$ are very general then they are not isomorphic, though derived and $L$-equivalent.
\end{theorem}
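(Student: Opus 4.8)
The plan is to realize $T$ as a relative Jacobian of the elliptic fibration on $S$: once $T\cong\jac^2(S/\Pp^1)$ is established, both the non-isomorphism and the $L$-equivalence drop out of results already quoted, the derived equivalence having been recorded above. To set this up, recall from Proposition \ref{propS20} that $S\cong S_{20}$, so by Lemma \ref{lmS20} the surface $S$ is an elliptic K3 surface with $\operatorname{NS}(S)\cong U(5)$ and multisection index $5$, the fibration $\pi\colon S\to\Pp^1$ having general fibre the elliptic quintic $C_s=\Gr(2,V_5)\cap\Pp(L_s^\perp)$, where $\{L_s\}_{s\in\Pp^1}$ is a pencil of $5$-dimensional subspaces of $\bigwedge^2V_5^\vee$, each containing $\Sigma$ (three of the five cutting hyperplanes are those defining $W_5$ inside $\Pp(\bigwedge^2V_5)$, the remaining two come from the defining section evaluated at the parameter $s$). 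Running the dual construction, as indicated after Proposition \ref{propS20}, presents $T$ likewise as an elliptic K3 surface over $\Pp^1$ whose general fibre is the dual quintic $C_s'=\Gr(2,V_5^\vee)\cap\Pp(L_s)$ for \emph{the same} pencil $\{L_s\}$, the section producing $T$ being by construction the transpose of the one producing $S$ under the filtration identifications around \eqref{eq4}. Thus $(C_s,C_s')$ is, for every $s$, precisely the dual pair of Lemma \ref{lmshinder}.

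The second step is to globalize Lemma \ref{lmshinder} over the generic point $\eta$ of $\Pp^1$: applying it with $V_5$ unchanged and $L=L_\eta$ a $5$-dimensional subspace of $\bigwedge^2V_5^\vee\otimes k(\Pp^1)$ gives an isomorphism $T_\eta\cong\jac^3(S_\eta)$ of genus-one curves over $k(\Pp^1)$. Since $T$ is a smooth K3 surface it is relatively minimal over $\Pp^1$, hence it is the minimal regular model of $T_\eta$, so $T\cong\jac^3(S/\Pp^1)$ as elliptic surfaces. Because the multisection index equals $5$, the period of $S_\eta$ is $5$ (it divides the index and cannot be $1$, the fibration having no section); therefore $\jac^k(S/\Pp^1)$ depends only on $k$ modulo $5$ and $\jac^k(S/\Pp^1)\cong\jac^{-k}(S/\Pp^1)$ via $L\mapsto L^\vee$. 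As $-3\equiv 2\pmod 5$, we conclude $T\cong\jac^2(S/\Pp^1)$, in agreement with the other half of Lemma \ref{lmshinder}, which reads $S\cong\jac^2(T/\Pp^1)$.

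Granting $T\cong\jac^2(S/\Pp^1)$, the theorem follows. Since the surfaces $S$ form a family dominating the moduli space of $U(5)$-polarized K3 surfaces (a short dimension count, the datum being a general section of $\oo(1,1,1)$ on $\Pp^1\times\Pp^1\times W_5$ modulo automorphisms), a very general $S$ as in the statement is very general in the sense of Proposition \ref{propshinder0}, which then gives $S\not\cong\jac^2(S/\Pp^1)=T$. For the $L$-equivalence one invokes that an elliptic K3 surface is $L$-equivalent to each of its relative Jacobians (\cite[Proposition 3.10]{ShinderEq20}), so that $\mathbb L^{n}([S]-[T])=0$ in $K_0(\operatorname{Var}_k)$ for some $n\ge 0$.

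I expect the main obstacle to be the bookkeeping of the duality built into the construction: one must verify that the pencil of $5$-dimensional subspaces cutting out the fibres of $T$ is literally the pencil $\{L_s\}$ cutting out those of $S$ -- equivalently, that the defining sections of $S$ and $T$ are transposes of one another under the identifications around \eqref{eq4} -- since this is exactly what permits Lemma \ref{lmshinder} to be applied to the generic fibres, and hence the whole argument to go through. A secondary point to nail down is that ``very general'' in the family of GM surfaces under consideration does force ``very general'' in the sense required by Proposition \ref{propshinder0} and by the $L$-equivalence statement.
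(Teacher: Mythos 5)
Your proposal follows the same route as the paper: identify $T$ with a coprime Jacobian of the elliptic fibration on $S$ by applying Lemma \ref{lmshinder} to the generic fibre over $\Pp^1$, then invoke Proposition \ref{propshinder0} for the non-isomorphism and \cite{ShinderEq20} for the $L$-equivalence, the derived equivalence having been recorded earlier. You are in fact slightly more careful than the paper at one point: the lemma literally gives $T_\eta\cong\jac^3(S_\eta)$, and your reduction $\jac^3\cong\jac^{-3}\cong\jac^2$ (using the degree-$5$ line bundle on the fibres) supplies the step that the paper leaves implicit when it writes $T_\eta\cong\jac^2(S_\eta)$ directly.
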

\begin{proof}
It is enough to prove that $T\cong\jac^2(S/\Pp^1)$, where the elliptic fibrations considered on $S$ ant $T$ are those given by \eqref{eqfib} and \eqref{eqfibdual} respectively. This, together with the very generality assumption, would imply that $S\not\cong T$ by Proposition \ref{propshinder0}. Observe that in our setting, the curve in \eqref{eqfib} in Lemma \ref{lmshinder} coincides with \eqref{curve}, and the analogue holds for the elliptic fibres in $T$ and \eqref{dualcurve}. This means that, for general $s\in\Pp^1$, $T_s\cong\jac^2(S_s)$. This construction can be carried over generic points. Indeed, let $\eta\in\Pp^1$ be the generic point. Then the fibre $S_\eta$ is the zero locus of a global section of $\oo(1)^{\oplus 2}$ on $W_5\times_k \operatorname{Spec} (k(\Pp^1))$, that is the transverse intersection $$S_\eta=(\Gr(2,V_5)\times_k \operatorname{Spec}k(\Pp^1))\cap\Pp(L^\perp)\subset\Pp\left(\bigwedge^2V_5^\vee\right)\otimes_k \operatorname{Spec}k(\Pp^1),$$ 
where $L$ is a five-dimensional subspace in $\bigwedge^2 V_5^\vee\otimes_k k(\Pp^1)$. Analogously, we see that $T_\eta$ is the intersection $T_\eta=(\Gr(2,V_5^\vee)\times_k \operatorname{Spec}k(\Pp^1))\cap\Pp(L)\subset\Pp\left(\bigwedge^2V_5^\vee\right)\otimes_k \operatorname{Spec}k(\Pp^1)$. In particular, Lemma \ref{lmshinder} applies and we conclude that $$T_\eta\cong\jac^2(S_\eta).$$
This in turn implies that $T\cong\jac^2(S/\Pp^1)$. Lastly, by \cite[Theorem 3.2]{ShinderEq20} we have $$\mathbb{L}^4([S]-[T])=0,$$
in $K_0(\operatorname{Var}_k)$.
\end{proof}
\section{On the geometry of the Fano K3-38}\label{SectionFK3}
As mentioned in the introduction, in this section we turn our focus to a particular family a Fano fourfolds. We give a brief description of their geometry and eventually prove Theorem \ref{thmYS2}.

As in the previous section, denote by $W_5:=\mathcal{Z}(\Gr(2,V_5),\oo(1)^{\oplus3})$ the del Pezzo threefold of degree five obtained as a section of Plücker embedding of $\Gr(2,V_5)$ by a codimension 3 subspace. 
\begin{definition}
    A Fano fourfold $X$ is of type K3-38 if it is a general divisor of type $(1,1,1)$ in the product $\Pp^1\times\Pp^1\times W_5.$
\end{definition}
\begin{remark}
    In the article \cite{BerFatManTan21} such fourfolds were defined as the zero loci $$\mathcal{Z}\left(\Pp^1\times\Pp^1\times\Gr(2,V_5),\oo(1,1,1)\oplus\oo(0,0,1)^{\oplus3}\right).$$
    However, it is clear that the two definitions are equivalent.
\end{remark}
For clarity, we prove the following description as stated in \cite{BerFatManTan21}.
\begin{proposition}\label{prop:geometry X} Let $X$ be a Fano fourfold of type K3-38, let $\pi_{12}: X \rightarrow \mathbb{P}^1 \times \mathbb{P}^1$ and $\pi_{13},\,\pi_{23}:X\rightarrow\Pp^1\times W_5$ be the projection maps. Then:
\begin{itemize}
    \item[(i)] the map $\pi_{12}$ is a flat, proper morphism whose fibres are del Pezzo surfaces of degree 5 with rational Gorenstein singularities;
    \item[(ii)] $\pi_{13},\pi_{23}$ are blow up maps $\bl_{S_{20}}(\Pp^1\times W_5)\rightarrow\Pp^1\times W_5$ where $S_{20}$ is a K3 surface of degree 20 and Picard rank 2.
\end{itemize}
In particular, $X$ is rational.
\end{proposition}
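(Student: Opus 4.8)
The plan is to handle the two projections separately, but both analyses start from the same setup. For general $X$ the divisor is smooth and irreducible by Bertini, since $\oo(1,1,1) = \oo(1)\boxtimes\oo(1)\boxtimes\oo_{W_5}(1)$ is very ample on $\Pp^1\times\Pp^1\times W_5$, and its defining section is an element $f\in H^0(\oo(1,1,1)) = U^\vee\otimes U'^\vee\otimes H^0(W_5,\oo_{W_5}(1))$, where $\Pp^1 = \Pp(U)$ and $\Pp^1 = \Pp(U')$. I would use $f$ in two guises: as a linear map $\Phi_f\colon U\otimes U'\to H^0(W_5,\oo_{W_5}(1))$, and, after fixing coordinates $[t_0:t_1]$ on the second $\Pp^1$, as $f = t_0\eta_0 + t_1\eta_1$ with $\eta_0,\eta_1\in H^0(\Pp^1\times W_5,\oo(1,1))$. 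For general $X$ the map $\Phi_f$ is injective and $\eta_0,\eta_1$ are general sections.

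First I would prove (i). The map $\pi_{12}$ is proper, being the restriction of the projection $\Pp^1\times\Pp^1\times W_5\to\Pp^1\times\Pp^1$. The fibre over $([s],[t])$ is the zero locus in $W_5$ of $\Phi_f(s\otimes t)$, which is nonzero by injectivity of $\Phi_f$; hence every fibre is a genuine hyperplane section of $W_5\subset\Pp^6$, in particular two-dimensional. Since $X$ is Cohen--Macaulay (a divisor in a smooth variety) and $\Pp^1\times\Pp^1$ is regular, constancy of the fibre dimension gives flatness by miracle flatness. Each fibre $F$ is a Cartier divisor in the smooth threefold $W_5$, hence Gorenstein and lci, as well as reduced and connected; adjunction gives $\omega_F\cong\oo_{W_5}(-1)|_F$ with $\omega_F^2 = \deg W_5 = 5$, so $F$ is a Gorenstein del Pezzo surface of degree $5$. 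The only substantive point is that these fibres carry at worst rational (equivalently, Du Val) singularities. Here I would argue that as $([s],[t])$ varies the hyperplanes $\{\Phi_f(s\otimes t)=0\}$ sweep out a smooth quadric surface inside a general three-plane $\Pp^3\subset|\oo_{W_5}(1)|\cong\Pp^6$, and then invoke a dimension count — the locus of hyperplane sections of the quintic del Pezzo threefold carrying a worse-than-Du-Val singularity has codimension at least $4$ in $\Pp^6$ — or, equivalently, the classification of singular hyperplane sections of $W_5$, to conclude that a general such family misses this bad locus.

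Next I would prove (ii) using the normal form $X = \{t_0\eta_0+t_1\eta_1=0\}$. The fibre of $\pi_{13}$ over $(s,w)\in\Pp^1\times W_5$ is the single point $[\eta_1(s,w):-\eta_0(s,w)]\in\Pp^1$ unless $\eta_0(s,w)=\eta_1(s,w)=0$, in which case it is all of $\Pp^1$. As $\eta_0,\eta_1$ are general, the common zero locus $B := \{\eta_0 = \eta_1 = 0\}$ is smooth, cut out scheme-theoretically by the regular sequence $(\eta_0,\eta_1)$, of codimension $2$ in $\Pp^1\times W_5$; it is a surface of type $S_{20}$, hence by Lemma \ref{lmS20} a K3 surface of degree $20$ and Picard rank $2$. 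After the change of coordinates $[t_0:t_1]\mapsto[-t_1:t_0]$, the equation of $X$ becomes the vanishing of the $2\times 2$ minors of $\left(\begin{smallmatrix}t_0 & t_1\\ \eta_0 & \eta_1\end{smallmatrix}\right)$, which is precisely the equation cutting out $\bl_{B}(\Pp^1\times W_5) = \operatorname{Proj}\bigoplus_{d\ge 0}I_B^d$ inside $\Pp^1\times(\Pp^1\times W_5)$, with $\pi_{13}$ realised as the blow-down; the exceptional divisor is the $\Pp^1$-bundle $\Pp\big((\oo(1,1)\oplus\oo(1,1))|_{B}\big)\to B$. The analysis of $\pi_{23}$ is identical after exchanging the two $\Pp^1$ factors, exhibiting it as the blow-up of another surface of type $S_{20}$ in $\Pp^1\times W_5$. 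Finally $W_5$ is rational, hence so is $\Pp^1\times W_5$ and therefore its blow-up $X$; alternatively, the generic fibre of $\pi_{12}$ is a smooth quintic del Pezzo surface over the function field, which always has a rational point and is rational.

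I expect the only real obstacle to be the singularity control in (i): verifying that the general quadric surface of hyperplane sections of $W_5$ meets only the Du Val part of its discriminant. Everything else — properness, miracle flatness, the adjunction computations, and the identification of $X$ with an explicit blow-up in (ii) — is essentially formal bookkeeping.
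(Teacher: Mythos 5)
Your proposal is correct and follows essentially the same route as the paper. Part (i) is the same argument (constant fibre dimension plus miracle flatness, adjunction to identify the fibres as quintic del Pezzo surfaces, and a generality/dimension-count argument for the singularities); note that the one genuinely substantive point you flag --- that the general quadric surface of hyperplane sections $\{\Phi_f(s\otimes t)=0\}\subset|\oo_{W_5}(1)|$ meets only the Du Val part of the discriminant --- is treated at the same level of detail in the paper, which simply asserts that singular fibres come from tangent hyperplanes or rank-$2$ forms and carry only $A_1$ singularities, so your honest identification of where the work lies is if anything a point in your favour (though for a complete argument you would still need to verify the claimed codimension bound, or argue directly as the paper does via the classification of singular hyperplane sections of $W_5$). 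For part (ii), where the paper invokes the Cayley trick of \cite[Lemma 3.1]{BerFatManTan21} (i.e.\ \cite[Lemma 2.1]{kuznetsov2016kuchle}) to get $X\cong\bl_{S_{20}}(\Pp^1\times W_5)$ with $S_{20}=\mathcal Z(\Pp^1\times W_5,\oo(1,1)^{\oplus2})$, you unwind that lemma by hand: writing $f=t_0\eta_0+t_1\eta_1$ and recognising $X$ as the vanishing of the $2\times2$ minors of $\bigl(\begin{smallmatrix}t_0&t_1\\ \eta_0&\eta_1\end{smallmatrix}\bigr)$, which is the standard determinantal presentation of the blow-up along the codimension-two centre $\{\eta_0=\eta_1=0\}$. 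This is the same identification, just made explicit in the rank-two case; the remaining properties of $S_{20}$ and the rationality of $X$ are obtained exactly as in the paper.
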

\begin{proof}
    Consider the projection $\pi_{12}:X\rightarrow\Pp^1\times\Pp^1$. The fibre $X_s$ of $\pi_{12}$ over a closed point $s\in \Pp^1\times\Pp^1$ is given by $W_5$ cut by a section $\sigma_s\in H^0(W_5,\oo(1))$. For what follows, denote by $\sigma_1,\,\sigma_2\,,\,\sigma_3\in H^0(\Gr(2,V_5),\oo(1))$ the sections defining $W_5$. The Fano $X$ is assumed to be general in the linear series $|\oo(1,1,1)|$. This implies that
    \begin{itemize}
        \item[(a)] for each $s\in\Pp^1\times\Pp^1$ the sections $\sigma_s,\,\sigma_1|_{W_5},\,\sigma_2|_{W_5},\,\sigma_3|_{W_5}$ are linearly independent, thus $X_s$ is two-dimensional,
        \item[(b)] the singular fibres arise from those hyperplanes $H_{\sigma_t}\subset\Pp(\wedge^2 V_5)$ which are tangent to $W_5$, or correspond to degenerate 2-forms on $V_5$ of rank 2. In both cases the singularities are of type $A_1$.
    \end{itemize}
    By item (a) the morphism $\pi_{12}$ is flat and the singularities are Gorenstein. Item (b) implies then that the singularities are also rational. Hence the adjunction formula gives $$K_{X_s}\cong (K_{\Gr(2,V_5)}\otimes\oo_{\Gr(2,V_5)}(4))|_{X_s}\cong \oo_{\Gr(2,V_5)}(-1)|_{X_s}.$$
    Thus, the anticanonical bundle $-K_{X_s}$ is ample, and for each $s\in\Pp^1$ fibre of $\pi_{12}$ is a del Pezzo surface with rational Gorenstein singularities. Then, it is easy to compute the degree of $X_s$, as it is given by four linear sections and $\deg X_s=\deg\Gr(2,V_5)=5$. 
    \\
    To tackle the second item, we work with $\pi_{23}:X\rightarrow\Pp^1\times W_5$, the argument for $\pi_{13}$ being analogous. We use the ``Cayley trick" in \cite[Lemma 3.1]{BerFatManTan21}, which is a special case of \cite[Lemma 2.1]{kuznetsov2016kuchle}. Consider the vector bundle $\mathcal{L}:=\oo(1,1)$ on $Y:=\Pp^1\times W_5$. We can write $X$ as $$X=\mathcal{Z}(\Pp^1\times Y,\oo_{\Pp^1}(1)\boxtimes\mathcal{L}),$$
    so that by the Cayley trick $X\cong \bl_{S_{20}} Y$, where $S_{20}=\mathcal{Z}(Y,\mathcal{L}^{\oplus2})=\mathcal{Z}(\Pp^1\times W_5,\oo(1,1)^{\oplus2})$. The rest of the properties for $S_{20}$ follow from Lemma \ref{lmS20}. Finally, since $W_5$ is rational by \cite[Theorem 4.2]{iskovskih1978fano}, the identity $X\cong\bl_{S_{20}}(\Pp^1\times W_5)$ provides a rational model for $X.$
\end{proof}
\begin{definition}
    Let $f:X\rightarrow S$ be a morphism of smooth algebraic varieties. A triangulated subcategory $\cal A\subset D^b(X)$ is called $S$-linear if $A\otimes^L Lf^*F\in\cal A$ for any $A\in\cal A$ and $F\in D^b(S)$.
\end{definition}
\begin{proposition}
    For a Fano fourfold of K3-38 type there are two semiorthogonal decompositions of its bounded derived category, namely
    \begin{align}
        D^b(X)&=\langle D^b(\Pp^1\times W_5), D^b(S_{20})\rangle\label{SODbl}\\
        D^b(X)&=\langle D^b(\Pp^1\times\Pp^1), D^b(\Pp^1\times\Pp^1),D^b(Z)\rangle,\label{SODxie}
    \end{align}
    with $Z\rightarrow\Pp^1\times\Pp^1$ flat and finite of degree 5. Moreover, they are linear over $\Pp^1\times W_5$ and $\Pp^1\times\Pp^1$ respectively.
\end{proposition}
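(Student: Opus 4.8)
The plan is to produce each of the two semiorthogonal decompositions from a known general mechanism, applied to the geometry already established in Proposition \ref{prop:geometry X}. For \eqref{SODbl}: since $\pi_{23}\colon X\to \Pp^1\times W_5$ is the blow-up of the K3 surface $S_{20}$ (a smooth subvariety of codimension $2$), Orlov's blow-up formula \cite{Orlov_1993} gives $D^b(X)=\langle D^b(\Pp^1\times W_5), D^b(S_{20})\rangle$, where the first component is $L\pi_{23}^*D^b(\Pp^1\times W_5)$ and the second is the image of $D^b(S_{20})$ under the composition of pullback to the exceptional divisor (a $\Pp^1$-bundle over $S_{20}$) with pushforward to $X$, twisted by $\oo(-E)$. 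Linearity over $\Pp^1\times W_5$ is then automatic: the first component is by construction the pullback of all of $D^b(\Pp^1\times W_5)$, hence stable under $\otimes^L L\pi_{23}^*F$, and the second (a single admissible block generated by objects supported on the exceptional divisor) is stable because for $A$ in it and $F\in D^b(\Pp^1\times W_5)$ one has $A\otimes^L L\pi_{23}^*F$ again supported on $E$ with the projection-formula compatibility making it land in the same block; more cleanly, the blow-up decomposition is well known to be $(\Pp^1\times W_5)$-linear by \cite{kuznetsov2016kuchle} or general relative-Orlov statements.

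For \eqref{SODxie}: by Proposition \ref{prop:geometry X}(i) the map $\pi_{12}\colon X\to \Pp^1\times\Pp^1$ is a flat proper family of (possibly singular, but Gorenstein with rational singularities and of degree $5$) del Pezzo surfaces, i.e. a del Pezzo quintic fibration over the surface $B=\Pp^1\times\Pp^1$. This is precisely the input for F. Xie's theorem \cite{Xie2021} on del Pezzo fibrations of degree $5$: for such a fibration there is a $B$-linear semiorthogonal decomposition $D^b(X)=\langle D^b(B), D^b(B), \mathcal{A}\rangle$ in which the residual component $\mathcal{A}$ is equivalent to $D^b(Z)$ for a scheme $Z$ finite and flat of degree $5$ over $B$ (the relative Hilbert scheme / "fivefold cover" attached to the quintic del Pezzo fibration). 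I would first check that the hypotheses of Xie's result are met — flatness of $\pi_{12}$ (established), the fibers being quintic del Pezzo surfaces with at worst the mild singularities allowed in \cite{Xie2021} (the $A_1$ description in part (b) of the proof of Proposition \ref{prop:geometry X}), and the relative Picard/anticanonical normalization — and then quote the theorem to obtain \eqref{SODxie} together with its $\Pp^1\times\Pp^1$-linearity, which is part of Xie's statement. The degree-$5$ finite flat cover $Z\to\Pp^1\times\Pp^1$ is exactly the $Z$ named in the proposition.

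The main obstacle is the second decomposition: one must verify that the del Pezzo quintic fibration $\pi_{12}$ genuinely falls within the scope of \cite{Xie2021}, i.e. that the singular fibers (arising from hyperplanes tangent to $W_5$ or corresponding to rank-$2$ forms) are of the type for which Xie constructs the cover $Z$ and the residual category, and that generality of $X$ ensures $\pi_{12}$ has a section or the requisite Brauer-class vanishing so that the two copies of $D^b(\Pp^1\times\Pp^1)$ appear untwisted. For a very general $X$ this should be guaranteed, but it is the point where a careful citation (and possibly a short genericity argument identifying the locus of singular fibers and its codimension in $\Pp^1\times\Pp^1$) is needed. Once the two decompositions are in hand, linearity in each case is either built into the cited theorem or follows from the standard fact that pullback components and relative residual components of a map to $S$ are $S$-linear.
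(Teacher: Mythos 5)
Your proposal matches the paper's proof: Orlov's blow-up formula applied to $\pi_{23}\colon X\cong\bl_{S_{20}}(\Pp^1\times W_5)\to\Pp^1\times W_5$ gives \eqref{SODbl}, and Xie's theorem applied to the quintic del Pezzo fibration $\pi_{12}$ gives \eqref{SODxie}, with linearity built into both cited results. The extra caution you raise about Brauer classes or a section is not needed, since the hypotheses of Xie's Theorem 1.1 (flatness, degree-5 del Pezzo fibres with rational Gorenstein singularities) are exactly what Proposition \ref{prop:geometry X}(i) establishes.
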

\begin{proof}
    The existence of the first semiorthogonal decompotition is a standard fact. Indeed, by Proposition \ref{prop:geometry X}(ii) the maps $\pi_{13}$ and $\pi_{23}$ are blow up maps $\bl_{S_{20}}(\Pp^1\times W_5)\rightarrow\Pp^1\times W_5$. It suffices then to apply Orlov's formula for a blow up (\cite[Theorem 4.3]{Orlov_1993}) to get \eqref{SODbl} together with $(\Pp^1\times W_5)$-linearity. \\
    The existence of the SOD in \eqref{SODxie} follows from Proposition \ref{prop:geometry X}(i) and \cite[Theorem 1.1]{Xie2021}. Namely, if $\mathcal{X}\rightarrow B$ is a flat morphism where each fibre is a Del-Pezzo surface of degree 5 with rational Gorenstein singularities, there is a $B$-linear semiorthogonal decomposition $$D^b(\mathcal{X})=\langle D^b(B),D^b(B), D^b(\mathcal{Z})\rangle,$$
    where $\mathcal{Z}\rightarrow B$ is flat and finite of degree 5. This in our case reads exactly as \eqref{SODxie}.
\end{proof}
\begin{remark}
    The fact that the Kuznetsov component in \eqref{SODbl} is a K3 category can be seen as an instance of a more general result. Let $M$ be a smooth projective fivefold with a fixed polarization $H$ such that $-K_M=2H$. Suppose $M$ admits a rectangular Lefschetz decomposition $$D^b(M)=\langle\mathcal{A,\mathcal{A}}(H)\rangle.$$
    Let $Y\in|H|$ be a general divisor and let $i:Y\hookrightarrow M$ be the corresponding embedding. The methods in \cite{Kuznetsov+2019+239+267} provide a semiorthogonal decomposition $$D^b(Y)=\langle\mathcal{A}_Y,\mathcal{R}_Y\rangle,$$
    where $\mathcal{A}_Y$ is the image of $\mathcal{A}$ under the functor $i^*:D^b(M)\rightarrow D^b(Y)$ and $\mathcal{R}_Y$ is a K3 category. For a Fano of type K3-38 $X$ this is easy to see. \\
    
    Let $N:=\Pp^1\times W_5$, and consider the fivefold $M:=\Pp^1\times N$. Observe also that $-K_M=2 H$, where $H=\oo_{\Pp^1\times\Pp^1\times\Gr(2,V_5)}(1,1,1)|_{M}$. The projective bundle formula for the projection $M\rightarrow N$ (\cite[Theorem 2.4]{kuznetsov2014semiorthogonal}) gives $$D^b(M)=\langle D^b(N), D^b(N)\otimes\oo(H)\rangle.$$
    Taking $\cal A:= D^b(N)$, this induces a semiorthogonal decomposition $D^b(X)=\langle\mathcal{A}_X,\mathcal{R}_X\rangle$, with $\mathcal{R}_X$ a K3 category. 
\end{remark}
In the paper \cite{BerFatManTan21}, after briefly describing the exceptional objects involved, the authors ask whether the decompositions \eqref{SODbl} and \eqref{SODxie} coincide, and whether the variety $Z$ in \eqref{SODxie} is a K3 surface, derived equivalent to $S_{20}$.\\
Using the constructions in section \ref{sectionGM}, we are able to give an answer.
\begin{proposition}
    The semiorthogonal decompositions \eqref{SODbl} and \eqref{SODxie} do not coincide.
\end{proposition}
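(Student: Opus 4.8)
The plan is to reduce the statement to a comparison of the two Kuznetsov components as \emph{subcategories} of $D^b(X)$, and then to rigidify that comparison using the finite covers of $\Pp^1\times\Pp^1$ that implement the two linearity structures.

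First I would make the word ``coincide'' precise. Since \eqref{SODbl} has two components and \eqref{SODxie} has three, the only way they can agree is that \eqref{SODxie} refines \eqref{SODbl}; grouping its first two pieces, this means $D^b(\Pp^1\times W_5)=\langle D^b(\Pp^1\times\Pp^1),D^b(\Pp^1\times\Pp^1)\rangle$ together with $\mathcal{R}_X:=D^b(S_{20})=D^b(Z)$ inside $D^b(X)$. As each of these spans is the left orthogonal, in $D^b(X)$, of the corresponding last piece, the two conditions are equivalent, so it suffices to prove that $D^b(S_{20})$ and $D^b(Z)$ are \emph{not} the same admissible subcategory of $D^b(X)$. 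I argue by contradiction and assume $D^b(S_{20})=D^b(Z)=:\mathcal{R}_X$.

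Next I would exploit linearity. By Proposition~\ref{prop:geometry X} and Orlov's blow-up formula the subcategory $D^b(S_{20})\subset D^b(X)$ is the image of $D^b(S_{20})$ under the blow-up $\pi_{23}$; it is $(\Pp^1\times W_5)$-linear via the closed embedding $S_{20}\hookrightarrow\Pp^1\times W_5$, and it is also $(\Pp^1\times\Pp^1)$-linear via $\pi_{12}$, the latter structure being implemented by a finite flat morphism $\psi\colon S_{20}\to\Pp^1\times\Pp^1$ of degree $5$. Indeed $\psi$ is the product of the two elliptic pencils on $S_{20}$, whose classes are the two primitive isotropic vectors of $\operatorname{NS}(S_{20})\cong U(5)$; they meet in $5$ points, and for very general $S_{20}$ both pencils have irreducible fibres (the orthogonal of an isotropic class in $U(5)$ carries no $(-2)$-class), so $\psi$ contracts nothing, hence is finite and flat. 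On the other hand $D^b(Z)\subset D^b(X)$ is $(\Pp^1\times\Pp^1)$-linear via $\pi_{12}$, with linear structure implemented by the finite flat degree $5$ cover $f\colon Z\to\Pp^1\times\Pp^1$ of \eqref{SODxie}. The equality $D^b(S_{20})=D^b(Z)=\mathcal{R}_X$ is an equality of $(\Pp^1\times\Pp^1)$-linear subcategories and therefore yields a $(\Pp^1\times\Pp^1)$-linear equivalence $D^b(S_{20})\simeq D^b(Z)$ compatible with $\psi$ and $f$. Writing $\mathcal{A}:=\psi_*\oo_{S_{20}}$ and $\mathcal{B}:=f_*\oo_Z$, which are \emph{commutative} sheaves of $\oo_{\Pp^1\times\Pp^1}$-algebras of rank $5$, such an equivalence is a Morita equivalence over $\Pp^1\times\Pp^1$; since a local projective generator of a commutative algebra is a line bundle, Morita rigidity forces $\mathcal{A}\cong\mathcal{B}$ as $\oo_{\Pp^1\times\Pp^1}$-algebras, whence $S_{20}\cong\operatorname{Spec}_{\Pp^1\times\Pp^1}\mathcal{A}\cong\operatorname{Spec}_{\Pp^1\times\Pp^1}\mathcal{B}\cong Z$.

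Finally I would invoke the identification of $Z$ with the dual Gushel--Mukai surface $T$ of Section~\ref{sectionGM} (equivalently $Z\cong\jac^2(S/\Pp^1)$ with $S\cong S_{20}$), which is the content of Theorem~\ref{thmYS2}(2): together with the very generality hypothesis and Proposition~\ref{propshinder0} it gives $S_{20}\not\cong Z$, contradicting the previous step. The step I expect to be the main obstacle is the rigidification: a derived equivalence between K3 surfaces never by itself forces an isomorphism, so one must genuinely check that the identification of Kuznetsov components respects the $(\Pp^1\times\Pp^1)$-linear structure, that both surfaces are finite over the base, that the linear structure on the blow-up component $D^b(S_{20})$ is implemented by the elliptic product map $\psi$ and not a twist of it, and that Morita rigidity for the commutative algebras $\mathcal{A},\mathcal{B}$ really descends from the equivalence. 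A secondary point is that the whole argument relies on the identification $Z\cong T$ and hence on the non-isomorphism result of Section~\ref{sectionGM}; without it the two Kuznetsov components stay abstractly equivalent and nothing is contradicted.
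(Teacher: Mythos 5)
Your reduction to comparing the two Kuznetsov components as subcategories of $D^b(X)$ is fine and matches the paper's setup, but the second paragraph contains a claim that is both unjustified and, in fact, false: namely that the blow-up component $D^b(S_{20})\subset D^b(X)$ is \emph{also} $(\Pp^1\times\Pp^1)$-linear via $\pi_{12}$, with the module structure implemented by the product $\psi$ of the two elliptic pencils on $S_{20}$. There is no reason the action of $\pi_{12}^*D^b(\Pp^1\times\Pp^1)$ on $D^b(X)$ should preserve the subcategory $j_*(p^*(-)\otimes\oo_E(E))$ at all --- the first $\Pp^1$ factor is precisely the ``Cayley trick'' direction, and twisting by $\oo_X(1,0,0)$ shifts the grading along the exceptional $\Pp^1$-bundle $E\to S_{20}$ rather than acting through any morphism $S_{20}\to\Pp^1\times\Pp^1$. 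Worse, if that claim were true the proposition would already follow (negatively, for your argument) in one line: a component that is simultaneously $(\Pp^1\times W_5)$-linear and $(\Pp^1\times\Pp^1)$-linear is closed under tensoring by all $\oo_X(a,b,c)$, in particular by the ample class $\oo_X(1,1,1)$, and a nonzero proper admissible subcategory can never be closed under tensoring by powers of an ample line bundle. So the premise you assert unconditionally is exactly what cannot happen.

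That observation is the paper's entire proof: assuming the two decompositions coincide, the common Kuznetsov component inherits both linearity structures, hence is linear over $\Pp^1\times\Pp^1\times W_5$, hence is $0$ or all of $D^b(X)$ --- a contradiction. This needs no Morita rigidity, no identification $Z\cong T$, no Jacobian computation, and no very-generality hypothesis; it works for any $X$ in the family for which the two decompositions exist. Your route, even if the linearity claim were repaired (say by arguing only under the contradiction hypothesis), would still require showing that the transported $(\Pp^1\times\Pp^1)$-module structure on $D^b(S_{20})$ is induced by a finite morphism to the base before Morita rigidity of commutative $\oo_{\Pp^1\times\Pp^1}$-algebras can be invoked, and it would make part (1) of Theorem B depend on the non-isomorphism statement of part (2). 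I recommend replacing the whole second and third paragraphs with the ample-twist argument above.
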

\begin{proof}
    Looking at the Kutnezsov components of both decompositions we see that $D^b(S_{20})$ is $(\Pp^1\times W_5)$-linear, while $D^b(Z)$ is $(\Pp^1\times\Pp^1)$-linear. In general, for a variety $M$, if a semiorthogonal component of $D^b(M)$ is $M$-linear then it is zero or the whole $D^b(M)$. In particular, if these decompositions coincide then the Kuznetsov components would be $(\Pp^1\times\Pp^1\times W_5)$-linear, which contradicts them not being zero or $D^b(\Pp^1\times\Pp^1\times W_5)$. 
\end{proof}

The next goal is to prove that the surface $Z$ appearing in \eqref{SODxie} is isomorphic to the surface $T$ appearing in \eqref{eqGMsdual}. If $X$ is very general, this gives immediately Theorem \ref{thmYS2}.

For what follows we recall a few results from \cite{Xie2021}. Let $\mathcal{X}\xrightarrow{\pi}B$ be a quintic del--Pezzo fibration, that is a flat morphism such that for any point $b\in B$ the fibre $\cal X_b$ is a del Pezzo surface of degree five with rational Gorenstein singularities.\\
Let 
\begin{align*}
    h_2(t)&:=5(t+1)^2,\\
    h_3(t)&:=\frac{1}{2}(t+1)(5t+6),
\end{align*}
be numerical polynomials. Consider the relative moduli spaces $\mathcal{M}_i(\cal X/B)$, $i\in\{2,3\}$, of semistable sheaves on the fibers of $\cal X\rightarrow B$ with Hilbert polynomial $h_i(t)$.
\begin{lemma}{\cite[Proposition 7.3]{Xie2021}}\label{lmXie}
For $i\in\{2, 3\}$, $\mathcal{M}_i(\cal X/B)$ are ﬁne moduli spaces. Let $\mathcal{E}_i$ be the universal families of $\mathcal{M}_i(\cal X/B)$. Then 
\begin{itemize}
  \item[(i)]
$\mathcal{M}_2(\cal X/B)\cong B$ and $\mathcal{E}_2|_{\cal X_s}$
is a vector bundle of rank 2. 
\item[(ii)] $\mathcal{M}_3(\cal X/B)\cong Z\rightarrow B$ is flat and finite of degree 5. 
Moreover, $\mathcal{E}_3$ is ﬂat over $\mathcal{M}_3(\cal X/B)$ and is a locally free sheaf over $\cal X$ of rank 1. 
\end{itemize}
Also, $\mathcal{E}_i$ has finite $\operatorname{Tor}$-amplitude over $\mathcal{M}_i(\cal X/B)$ and finite $\operatorname{Ext}$-amplitude over $\cal X$.
\end{lemma}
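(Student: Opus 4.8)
This is \cite[Proposition 7.3]{Xie2021}, and the plan is to reconstruct its proof. Since everything is compatible with base change on $B$ and the relative moduli functors $\cal M_i(\cal X/B)$ are representable by schemes proper over $B$ (the fibres of $\cal X\to B$ are Gorenstein surfaces with ample $-K$, so Gieseker semistability and the usual boundedness are available), the first step is to reduce to analysing, on a single quintic del Pezzo surface $Y$ with at worst du Val singularities, the $(-K_Y)$-Gieseker semistable sheaves with Hilbert polynomial $h_2$, respectively $h_3$.

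For the smooth case $Y\cong\bl_{p_1,\dots,p_4}\Pp^2$ a Riemann--Roch computation pins down the invariants: a semistable sheaf with Hilbert polynomial $h_2$ has rank $2$, $c_1=-K_Y$ and $c_2=2$, while one with Hilbert polynomial $h_3$ is a rank-$1$ torsion-free sheaf $I_Z\otimes L$ with $L\cdot(-K_Y)=3$ and $L^2=1+2\operatorname{length}(Z)$. By the Hodge index theorem $L^2\le 1$, so $Z=\emptyset$ and $L$ is one of the five classes with $L^2=1$, $L\cdot(-K_Y)=3$, namely the pullbacks of $\oo_{\Pp^2}(1)$ along the five contractions $Y\to\Pp^2$; since every rank-$1$ torsion-free sheaf on a smooth surface is stable, $\cal M_3(Y)$ is five reduced points and $\cal E_3$ restricts to a line bundle on each fibre. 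In the rank-$2$ case, $\chi(E,E)=1$ together with $\operatorname{Ext}^2(E,E)=0$ (as $-K_Y$ is ample and $E$ stable) force $E$ to be a rigid, reduced, isolated point of the moduli space; the unique such sheaf is the restriction $\cal U^\vee_Y$ of the tautological subbundle under the Plücker linear-section embedding $Y\hookrightarrow\Gr(2,V_5)$, so $\cal M_2(Y)$ is a single reduced point.

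The technical heart is to propagate this to the singular fibres and then to globalise. On a du Val quintic del Pezzo surface $-K_Y$ is still ample, $\cal U^\vee_Y$ is still locally free (being reflexive on a surface) and remains the unique semistable sheaf with Hilbert polynomial $h_2$, so $\cal M_2(Y)$ is again one reduced point; for $h_3$ one must redo the classification allowing rank-$1$ torsion-free sheaves that fail to be locally free at a singular point — these are governed by the maximal Cohen--Macaulay modules over the $A_n$-singularities — and verify that the moduli still has length exactly $5$ with semistability $=$ stability throughout. Granting this, $\cal M_2(\cal X/B)\to B$ is finite and bijective with reduced fibres; the vanishing $\operatorname{Ext}^1_{\cal X_b}(\cal U^\vee_Y,\cal U^\vee_Y)=0$ lets the rigid bundle deform uniquely, hence glue to a rank-$2$ bundle $\cal E_2$ on $\cal X$, which exhibits a section $B\to\cal M_2(\cal X/B)$ that is then forced to be an isomorphism. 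Likewise $Z:=\cal M_3(\cal X/B)\to B$ is finite of constant fibre-length $5$, and flatness of $Z\to B$ follows from miracle flatness once $Z$ is seen to be Cohen--Macaulay of dimension $\dim B$, which in this surface setting one reads off from the local structure of the moduli problem.

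Finally, the universal sheaf $\cal E_3$ on $\cal X\times_B Z$ is flat over $Z$ by construction and is locally free of rank $1$ on the two-dimensional total space $\cal X\times_B Z$ — consistent with its restriction to a singular fibre of $\cal X\to B$ being merely torsion-free, since local freeness is tested on $\cal X\times_B Z$, not on the surface fibres. The Tor- and Ext-amplitude statements are then formal: over $\cal M_i$ the sheaf $\cal E_i$ is flat, hence of Tor-amplitude $[0,0]$, and over $\cal X$ it is perfect (locally free for $i=3$, a vector bundle for $i=2$), so $R\mathcal{H}om(\cal E_i,-)$ has bounded amplitude. I expect the genuine obstacle to be the singular-fibre analysis feeding both the length-$5$ count for $\cal M_3$ and the local freeness of $\cal E_3$ over the total space above du Val points, together with the Cohen--Macaulayness needed for flatness of $Z\to B$.
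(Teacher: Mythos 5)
The paper does not prove this statement at all: Lemma \ref{lmXie} is quoted verbatim from \cite[Proposition 7.3]{Xie2021} and used as a black box, so there is no in-paper argument to compare your reconstruction against. Judged on its own terms, your sketch correctly identifies the shape of Xie's argument: reduce to a fibrewise classification of semistable sheaves on a quintic del Pezzo surface, where the Hilbert polynomial $h_3$ forces a rank-one sheaf $I_Z\otimes L$ with $L\cdot(-K_Y)=3$ and $L^2=1+2\operatorname{length}(Z)$, hence (by Hodge index and parity) a line bundle with $L^2=1$, of which there are exactly five on the smooth surface; and $h_2$ forces rank $2$, $c_1=-K_Y$, $c_2=2$, with $\chi(E,E)=1$ and $\operatorname{Ext}^2(E,E)=0$ giving rigidity, the sheaf being $\mathcal U^\vee|_Y$. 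These numerical computations all check out.

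As a proof, however, the proposal is incomplete precisely where you flag it: the classification over the du Val fibres (where rank-one torsion-free sheaves need not be line bundles and the length-five count must be re-established), the uniqueness --- not merely rigidity --- of the $h_2$-semistable sheaf, the gluing of the fibrewise data into relative moduli spaces with universal families, and the flatness of $Z=\mathcal M_3(\cal X/B)\to B$ are all deferred with ``granting this'' rather than carried out; these steps are the actual content of Xie's proposition. Two smaller inaccuracies: $\cal X\times_B Z$ has dimension $\dim B+2$, not $2$, so the local-freeness of $\cal E_3$ is not a two-dimensional statement; and the existence of a unique semistable sheaf in class $h_2$ on every (possibly singular) fibre is what produces the section $B\to\mathcal M_2(\cal X/B)$, so it cannot be quoted as an input to that step. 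Since the authors only import the result, none of this affects the paper, but your write-up should be presented as an outline of \cite{Xie2021} rather than a self-contained proof.
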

As explained in \cite{Xie2021}, the universal family $\cal E_2$ of the moduli space $\cal M_2(\cal X/B)$ induces the cartesian square
$$\begin{tikzcd}
\cal X\arrow[r]\arrow[d]&\Gr_B(2,(\pi_*\cal E_2)^\vee)\arrow[d]\\
\Pp_B((\pi_*\omega_{\cal X/B}^{-1})^\vee)\arrow[r]&\Pp_B(\bigwedge^2 (\pi_*\cal E_2)^\vee),
\end{tikzcd}
$$
where $\omega_{\cal X/B}^{-1}$ is the relative anticanonical sheaf (see \cite[Section 7.2]{Xie2021} for more details). Define $\cal F^\perp$ to be the kernel $$0\rightarrow\cal F^\perp\rightarrow\bigwedge^2 \pi_*\cal E_2\rightarrow \pi_*\omega_{\cal X/B}^{-1}\rightarrow 0,$$
and $Z'$ the corresponding dual linear section, i.e. 
$$Z':=\Pp_B(\cal F^\perp)\times_B\Gr_B(2,\pi_*\cal E_2)\subset\Pp_B\left(\bigwedge^2 \pi_*\cal E_2\right).$$
\begin{theorem}{\cite[Theorem 7.4]{Xie2021}}\label{thmxie}
    Let $\pi:\cal X\rightarrow B$ be a quintic del Pezzo fibration. Denote $\pi':Z'\rightarrow B$ the fibration above. Then there is a $B$-linear semiorthogonal decomposition $$D^b(\cal X)=\langle D^b(B), D^b(B), D^b(Z')\rangle.$$
    In particular, $Z'$ is isomorphic to the variety $Z$ appearing in Lemma \ref{lmXie}(ii).
\end{theorem}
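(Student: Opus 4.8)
The plan is to obtain the decomposition as an instance of homological projective duality (HPD) for the Grassmannian $\Gr(2,5)$, performed in families over $B$, and then to identify the a priori abstract dual variety it produces with the moduli space $Z$ of Lemma~\ref{lmXie}(ii). For the first step I would use Lemma~\ref{lmXie}(i) together with the cartesian square recalled above: the universal bundle $\cal E_2$ exhibits $\cal X$ as the relative linear section of the Grassmannian bundle $\Gr_B(2,\cal F)$, where $\cal F:=(\pi_*\cal E_2)^\vee$, cut inside $\Pp_B(\wedge^2\cal F)$ by the corank-$4$ subbundle $(\pi_*\omega_{\cal X/B}^{-1})^\vee\subset\wedge^2\cal F$. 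Fibrewise this records the classical fact that a quintic del Pezzo surface with rational Gorenstein singularities is a codimension-$4$ linear section of $\Gr(2,5)\subset\Pp^9$, and the complementary relative linear section is exactly $Z'=\Pp_B(\cal F^\perp)\times_B\Gr_B(2,\cal F^\vee)$, with $\cal F^\vee=\pi_*\cal E_2$.

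Next I would invoke the family version of Kuznetsov's HPD for $\Gr(2,5)$. Recall that $\Gr(2,V_5)$ carries a rectangular Lefschetz decomposition of length $5$ whose every block is $\langle\oo,\cal U^\vee\rangle$, and that it is homologically projectively self-dual, its HP dual being $\Gr(2,V_5^\vee)$ with the analogous structure. Replacing $V_5$ by $\cal F$ one obtains a $B$-linear Lefschetz decomposition of $D^b(\Gr_B(2,\cal F))$ and a relative HP duality with $\Gr_B(2,\cal F^\vee)$; the relative HPD theorem applied to the linear-section pair $(\cal X,Z')$ then produces a $B$-linear decomposition $D^b(\cal X)=\langle\cal A,\cal A',D^b(Z')\rangle$. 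Since passing to a codimension-$4$ section consumes $4$ of the $5$ Lefschetz blocks, exactly one block $\langle\oo,\cal U^\vee\rangle$ of length $2$ survives; each of its two generators restricts to a $B$-exceptional object on $\cal X$, so $\cal A\cong\cal A'\cong D^b(B)$ and $D^b(\cal X)=\langle D^b(B),D^b(B),D^b(Z')\rangle$, the $B$-linearity being automatic from the relative formalism. A sanity check on a smooth fibre: $\rk K_0(S_5)=7=2+5$.

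It then remains to identify $Z'$ with $Z=\cal M_3(\cal X/B)$. One checks directly that $\pi':Z'\to B$ is finite flat of degree $5$, its fibres being $\Gr(2,V_5^\vee)\cap\Pp^3$, a length-$5$ scheme which is reduced for general linear sections, the length being constant over $B$. To match it with $Z$, recall that on a single smooth fibre the HP-dual linear section $\Gr(2,V_5^\vee)\cap\Pp(L)$ is precisely the fine moduli space of sheaves on $S_5$ with Hilbert polynomial $h_3$, and the HPD kernel restricts there to the universal sheaf; thus $Z'$ and $Z$ represent the same relative moduli problem. The finite $\operatorname{Tor}$- and $\operatorname{Ext}$-amplitude of the universal families recorded in Lemma~\ref{lmXie} is what guarantees that the HPD construction is compatible with base change to the fibres, and uniqueness of the fine moduli space then yields $Z'\cong Z$ over $B$, matching $\cal E_3$ with the HPD kernel.

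I expect the main obstacle to lie in the presence of the singular fibres. The standard HPD theorems require the relevant linear sections to be smooth of the expected dimension, whereas here the fibres of $\cal X\to B$ (and possibly those of $Z'\to B$) degenerate; one must therefore use a formulation of HPD robust enough to tolerate rational Gorenstein singularities along a small locus, and the decisive input making this work is exactly the base-change control provided by the amplitude hypotheses in Lemma~\ref{lmXie}. The second delicate point is that the identification $Z'\cong Z$ must be made uniformly over the whole base, not merely fibrewise — that is, one has to upgrade the abstract HPD dual category to the concrete moduli space $Z$ together with its universal sheaf $\cal E_3$.
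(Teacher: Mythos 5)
This statement is not proved in the paper at all: it is quoted verbatim from Xie \cite[Theorem 7.4]{Xie2021} and used as a black box, so there is no internal proof to measure your attempt against. Judged against the source, your sketch is essentially a reconstruction of Xie's own HPD argument (her Section 7): exhibit $\cal X$ as the corank-$4$ relative linear section of $\Gr_B(2,(\pi_*\cal E_2)^\vee)\subset\Pp_B(\bigwedge^2(\pi_*\cal E_2)^\vee)$ determined by $(\pi_*\omega_{\cal X/B}^{-1})^\vee$ (note $\pi_*\omega_{\cal X/B}^{-1}$ has rank $6$, so the corank is indeed $4$), apply relative homological projective self-duality of $\Gr(2,5)$ with its length-$5$ rectangular Lefschetz decomposition by blocks $\langle\oo,\cal U^\vee\rangle$, and conclude that one block survives, giving $\langle D^b(B),D^b(B),D^b(Z')\rangle$; your $K_0$ count on a smooth fibre is the right consistency check. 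Xie in fact gives two proofs, the primary one being moduli-theoretic (the Fourier--Mukai kernels $\cal E_2$, $\cal E_3$ of Lemma \ref{lmXie} directly furnish the three components), with the HPD description obtained afterwards; you run the logic in the opposite order, which is legitimate.

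That said, as a proof your text is a roadmap rather than an argument: the two points you flag as ``obstacles'' are precisely the technical content of the theorem. First, classical HPD is not directly applicable here --- one needs a relative version over $B$ that tolerates fibres of $\cal X\to B$ and of $Z'\to B$ which are singular (respectively non-reduced) but of expected dimension, and establishing this is where the Tor/Ext-amplitude conditions of Lemma \ref{lmXie} and the flatness of $\pi$ actually enter; asserting that they ``guarantee base-change compatibility'' does not discharge this. Second, the identification of the fibrewise dual linear section $\Gr(2,V_5^\vee)\cap\Pp^3$ with the fine moduli space $\cal M_3$ of sheaves with Hilbert polynomial $h_3$, together with its globalization over $B$ via a universal family on $Z'\times_B\cal X$, is a separate nontrivial step (it encodes the five ``conic bundle'' structures on a quintic del Pezzo) and you invoke it as a recollection rather than proving it. So: correct strategy, faithful to the cited source, but with the two substantive steps left open.
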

We are ready for the main result of this section.\\
Recall the notation introduced in section \ref{sectionGM}. If $\sigma_1,\sigma_2,\sigma_3\in H^0(\Gr(2,V_5),\oo(1))$ are general sections, we denote $\Sigma=\hbox{Span}(\sigma_1,\sigma_2,\sigma_3)\subset\bigwedge^2V_5^\vee$, and $\Sigma^\perp\subset\bigwedge^2V_5$ its 7-dimensional orthogonal complement. In particular, $W_5=\Gr(2,V_5)\cap\Pp(\Sigma^\perp)$. Let $V_2$ and $V_2'$ be 2-dimensional vector spaces, so that $\Pp^1\times\Pp^1=\Pp(V_2)\times\Pp(V_2')$.
\begin{proposition}
    Let $X$ be a Fano fourfold of type K3-38, and let $Z$ and $T$ be the surfaces defined by \eqref{SODxie} and \eqref{eqGMsdual} respectively. There is an isomorphism $Z\cong T$.
\end{proposition}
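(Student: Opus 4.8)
The plan is to realise both $Z$ and $T$ as one and the same incidence variety over the base $B:=\Pp(V_2)\times\Pp(V_2')=\Pp^1\times\Pp^1$. Recall from \eqref{eq4} that the general section defining $X$ determines an embedding $V_2\otimes V_2'\hookrightarrow\bigwedge^2V_5^\vee/\Sigma$; for a point $s=([v_2],[v_2'])\in B$ let $\sigma_s\in\bigwedge^2V_5^\vee/\Sigma$ be the image of $v_2\otimes v_2'$, choose a lift $\widetilde\sigma_s\in\bigwedge^2V_5^\vee$, and set $L_s:=\Sigma+\langle\widetilde\sigma_s\rangle$, a $4$-dimensional subspace. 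As in Proposition \ref{prop:geometry X}, the fibre of $\pi_{12}$ over $s$ is the quintic del Pezzo surface
$$
X_s\;=\;W_5\cap\{\widetilde\sigma_s=0\}\;=\;\Gr(2,V_5)\cap\Pp(L_s^\perp)\;\subset\;\Pp(\bigwedge^2V_5),
$$
and since $h^0(X_s,-K_{X_s})=6$ it is non-degenerate in $\Pp(L_s^\perp)\cong\Pp^5$. I will show that both $Z$ and $T$ are isomorphic over $B$ to
$$
\mathcal I:=\{\,(\Pi,s)\in\Gr(2,V_5^\vee)\times B : \text{the Pl\"ucker point of }\Pi\text{ lies in }\Pp(L_s)\,\}.
$$

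The identification $T\cong\mathcal I$ is elementary linear algebra of the cone. By definition $T=\Gr(2,V_5^\vee)\cap\mathsf C_{\Pp(\Sigma)}(\Pp(V_2)\times\Pp(V_2'))$, and the cone lies inside $\Pp(\Sigma')$, where $\Sigma'\subset\bigwedge^2V_5^\vee$ is the $7$-dimensional preimage of $V_2\otimes V_2'$. Projection from the vertex $\Pp(\Sigma)$ is the linear map $\Pp(\Sigma')\dashrightarrow\Pp(\Sigma'/\Sigma)=\Pp(V_2\otimes V_2')$; it carries the cone onto the Segre quadric $\Pp(V_2)\times\Pp(V_2')=B$, and its fibre over $s$ is the linear space $\Pp(\Sigma+\langle\widetilde\sigma_s\rangle)=\Pp(L_s)$. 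For $X$ general the plane $\Pp(\Sigma)$ is disjoint from $\Gr(2,V_5^\vee)$ and $L_s\cap L_{s'}=\Sigma$ for $s\ne s'$, so this projection restricts to a morphism $T\to B$, and $\Pi\mapsto(\Pi,\text{its image})$ gives the isomorphism $T\cong\mathcal I$. (Both sides are finite of degree $5=\deg\Gr(2,5)$ over $B$.)

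The identification $Z\cong\mathcal I$ requires unwinding Xie's construction, and this is where the real work is. By Lemma \ref{lmXie} and Theorem \ref{thmxie} applied to the quintic del Pezzo fibration $\pi_{12}\colon X\to B$, we have $Z\cong Z'=\Pp_B(\mathcal F^\perp)\times_B\Gr_B(2,\pi_{12*}\mathcal E_2)$, where $\mathcal F^\perp=\ker(\bigwedge^2\pi_{12*}\mathcal E_2\to\pi_{12*}\omega^{-1}_{X/B})$ and $\mathcal E_2$ is the universal rank-$2$ bundle of $\mathcal M_2(X/B)\cong B$. Let $p\colon X\to\Gr(2,V_5)$ be the projection and $\mathcal U$ the tautological subbundle. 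The key claim is that $\mathcal E_2\cong p^*\mathcal U^\vee\otimes\pi_{12}^*\mathcal N$ for some line bundle $\mathcal N$ on $B$: indeed $p^*\mathcal U^\vee|_{X_s}$ is stable of rank $2$ with $c_1=\oo(1)|_{X_s}=-K_{X_s}$ and $c_2=2$, hence has Hilbert polynomial $h_2(t)=5(t+1)^2$, so it represents the unique point of $\mathcal M_2(X/B)$ over $s$; being $B$-flat, $p^*\mathcal U^\vee$ thus defines a section of the structure morphism $\mathcal M_2(X/B)\to B$, necessarily the one identifying $\mathcal M_2(X/B)$ with $B$, and $\mathcal E_2$ differs from it only by the usual line-bundle ambiguity of a universal family. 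Consequently $\pi_{12*}\mathcal E_2\cong V_5^\vee\otimes\mathcal N$, so $\Gr_B(2,\pi_{12*}\mathcal E_2)=\Gr(2,V_5^\vee)\times B$ and $\Pp_B(\bigwedge^2\pi_{12*}\mathcal E_2)=\Pp(\bigwedge^2V_5^\vee)\times B$, with the right-hand map of Xie's diagram the relative Pl\"ucker embedding (the twist by $\mathcal N$ is immaterial for these projective and Grassmann bundles). On the other hand, Xie's diagram presents $X\hookrightarrow\Pp_B((\pi_{12*}\omega^{-1}_{X/B})^\vee)$ as the relative anticanonical embedding, hence fibrewise as the linear span of $X_s$ inside $\Pp(\bigwedge^2V_5)$, which by the non-degeneracy noted above is $\Pp(L_s^\perp)$. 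Dualising, $\mathcal F^\perp$ is the rank-$4$ subbundle whose fibre over $s$ is $(L_s^\perp)^\perp=L_s$. Therefore
$$
Z\;\cong\;Z'\;=\;\{\,(\Pi,s):\ \text{Pl\"ucker}(\Pi)\in\Pp(L_s)\,\}\;=\;\mathcal I,
$$
and comparing with the previous paragraph, $Z\cong T$.

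The main obstacle is establishing this description of $\mathcal E_2$: that $p^*\mathcal U^\vee|_{X_s}$ is stable with the stated numerical invariants (the invariants are a direct Chern-class computation; stability of the restriction of the tautological bundle to a quintic del Pezzo surface is classical), and controlling what happens over the singular fibres of $\pi_{12}$, which are del Pezzo surfaces with $A_1$-singularities. The cleanest way to avoid the second issue is to carry out the identification of $Z'$ and $T$ only over the generic point $\eta$ of $B$, where $X_\eta$ is a \emph{smooth} quintic del Pezzo surface over $k(B)$; one then gets $Z_\eta=T_\eta=\Gr(2,V_5^\vee)_{k(B)}\cap\Pp(L_\eta)$, and since $Z$ and $T$ are both finite flat over $B$ inside the separated scheme $\Gr(2,V_5^\vee)\times B$, each equals the scheme-theoretic closure of this common generic fibre, whence $Z=T$. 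One also needs the standard facts, valid for the general linear sections in play, that $h^0(X_s,-K_{X_s})=6$ and that the restriction $H^0(\Gr(2,V_5),\mathcal U^\vee)\to H^0(X_s,\mathcal U^\vee|_{X_s})$ is an isomorphism, so that $\pi_{12*}\mathcal E_2$ has rank $5$ with the claimed description.
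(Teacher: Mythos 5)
Your main line of argument is sound and is essentially an explicit, fibrewise unwinding of what the paper does in one step: the paper identifies $Z$ with the dual linear section by quoting that the homological projective dual of $W_5\subset\Pp(\Sigma^\perp)$ is the $5{:}1$ cover $\Gr(2,V_5^\vee)\to\Pp(\bigwedge^2V_5^\vee/\Sigma)$ (\cite[Proposition A.10]{kp2023}) and base-changing along $B\to\Pp(\bigwedge^2V_5^\vee/\Sigma)$, whereas you reconstruct that identification by hand from Xie's moduli description, showing $\mathcal E_2\cong p^*\mathcal U^\vee\otimes\pi_{12}^*\mathcal N$ and hence $\Gr_B(2,\pi_{12*}\mathcal E_2)\cong\Gr(2,V_5^\vee)\times B$ with $\mathcal F^\perp$ the family of the $L_s$. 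Your identification of $T$ with the incidence variety via projection from the vertex $\Pp(\Sigma)$ is exactly the paper's "taking the pullback is the same as intersecting with the cone". The numerical checks ($c_2=2$, Hilbert polynomial $5(t+1)^2$, $h^0(\mathcal U^\vee|_{X_s})=5$) are correct.

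The genuine gap is in your fallback for the singular fibres. Restricting to the generic point $\eta$ and then taking scheme-theoretic closures does not work as stated: a finite flat cover of the surface $B$ is \emph{not} determined by its generic fibre (the discriminant of $\pi_{12}$ is a divisor, not of codimension $\geq 2$; compare $\oo_B\times\oo_B$ with $\oo_B[x]/(x^2-t^2)$ over $\mathbb A^1$, which agree generically but are non-isomorphic finite flat degree-$2$ covers). Your closure argument implicitly assumes that $Z$ sits inside $\Gr(2,V_5^\vee)\times B$ as a closed subscheme \emph{over all of} $B$; but that global embedding is precisely what requires the identification $\pi_{12*}\mathcal E_2\cong V_5^\vee\otimes\mathcal N$ over the whole of $B$, i.e.\ the analysis of $\mathcal E_2$ over the $A_1$-singular fibres that the workaround was meant to avoid. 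So the fallback is circular. The fix is to run your main argument globally: show that $p^*\mathcal U^\vee$ restricted to every fibre --- including the Gorenstein del Pezzo surfaces with $A_1$-singularities, which are still anticanonically embedded linear sections of $\Gr(2,V_5)$ --- is stable with Hilbert polynomial $h_2$, so that $p^*\mathcal U^\vee$ induces the tautological section of $\mathcal M_2(X/B)\cong B$ over all of $B$ and $\mathcal E_2\cong p^*\mathcal U^\vee\otimes\pi_{12}^*\mathcal N$ globally; the closure step then becomes unnecessary. (Alternatively, one can sidestep the moduli computation entirely by invoking homological projective duality as the paper does.)
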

\begin{proof}
    The homological projective dual of $\Gr(2,V_5)\hookrightarrow\Pp(\bigwedge^2V_5)$ is given by $\Gr(2,V_5^\vee)\hookrightarrow\Pp(\bigwedge^2V_5^\vee)$. So, by \cite[Proposition A.10]{kp2023}, the homological projective dual of $W_5\subset\Pp(\Sigma^\perp)$ is $$\Gr(2,V_5^\vee)\rightarrow\Pp\left(\bigwedge^2V_5^\vee\bigg/\Sigma\right),$$
        which is a 5:1 cover, since the Plücker embedding has degree 5. If we consider the base change of this cover along the map $\Pp(V_2)\times\Pp(V_2')\rightarrow\Pp(\bigwedge^2V_5^\vee/\Sigma)$ induced by the section $\sigma\in H^0(\Pp(V_2)\times\Pp(V_2')\times W_5,\oo(1,1,1))$ defining $X$, we obtain the cover in Theorem \ref{thmxie}. So we have the following cartesian diagram 
        $$
        \begin{tikzcd}
            Z\arrow[r]\arrow[d]&\Gr(2,V_5^\vee)\arrow[d]\\
\Pp(V_2)\times\Pp(V_2')\arrow[r,hook]&\Pp(\bigwedge^2V_5^\vee/\Sigma),
        \end{tikzcd}
        $$
        and taking this pullback is the same as taking the intersection $\Gr(2,V_5^\vee)\cap\mathsf{C}_{\Pp(\Sigma)}(\Pp(V_2)\times\Pp(V_2')).$ In particular, $Z\cong T$.
\end{proof}
Combining the above results, we have shown the following.
\begin{corollary}
    Let $X$ be a very general Fano fourfold of type K3-38. The varieties $S_{20}$ and $Z$, whose bounded derived categories appear as the Kuznetsov components of the semiorthogonal decompositions \eqref{SODbl} and \eqref{SODxie} respectively, are K3 surfaces, which are non isomorphic, $L$-equivalent, Fourier--Mukai partners.
\end{corollary}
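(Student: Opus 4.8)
The final statement to prove is the Corollary asserting that $S_{20}$ and $Z$ are non-isomorphic, $L$-equivalent, Fourier--Mukai partner K3 surfaces. The plan is to simply assemble the pieces that have already been established in the two sections, since almost all the work has been distributed across the preceding propositions.

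\medskip

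\textbf{Step 1: Identify the two surfaces with $S$ and $T$.} First I would invoke Proposition \ref{propS20}, which gives $S \cong S_{20}$, and the last Proposition of Section \ref{SectionFK3}, which gives $Z \cong T$. Thus the Corollary reduces entirely to the statement already proved as the main Theorem of Section \ref{sectionGM}: that $S$ and $T$ are K3 surfaces which are derived equivalent, $L$-equivalent, but not isomorphic. Strictly speaking one should check that the ``very general'' hypothesis on $X$ translates to the very-generality hypothesis needed for $S,T$ (or equivalently for the Lagrangian $A$ in Theorem \ref{thmYS1}); this is a matter of noting that the section $\sigma \in H^0(\Pp^1\times\Pp^1\times W_5, \oo(1,1,1))$ defining $X$ is precisely the datum producing the filtration of $\bigwedge^2 V_5^\vee$ and hence the surfaces $S$ and $T$, so a very general $X$ corresponds to very general $(\Sigma, V_2\otimes V_2')$, hence to a very general Lagrangian $A(S)$.

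\medskip

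\textbf{Step 2: Quote the three properties.} For the K3 property: $S_{20}$ is a K3 surface by the adjunction/Lefschetz argument given before Lemma \ref{lmS20}, and $Z \cong T$ is a K3 surface by the discussion following Proposition \ref{propS20} (it is cut out as a GM surface, i.e. a $\Gr(2,V_5^\vee)$ linearly-sectioned-then-quadric-sectioned surface, with trivial canonical bundle and $h^{1,0}=0$). For the Fourier--Mukai (derived) equivalence: $S$ and $T$ are generalized duals by \cite[Proposition 3.28]{debarre2018gushel} applied to the projectively dual quadric cones, hence $\cal A_S \cong \cal A_T$ by \cite[Corollary 6.5]{kp2023}; since these are K3 surfaces, $\cal A_S = D^b(S)$ and $\cal A_T = D^b(T)$, so $D^b(S_{20}) \cong D^b(Z)$. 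For $L$-equivalence and non-isomorphism: these are exactly the content of the main Theorem of Section \ref{sectionGM}, whose proof identifies $T \cong \jac^2(S/\Pp^1)$, applies Proposition \ref{propshinder0} for non-isomorphism, and \cite[Theorem 3.2]{ShinderEq20} for $\mathbb L^4([S]-[T])=0$ (together with $[S]\neq[T]$, which follows from $S\not\cong T$ being the only obstruction, or more carefully from the cut-and-paste class computation in \cite{ShinderEq20}).

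\medskip

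There is essentially no new obstacle: the only mild subtlety is the bookkeeping of genericity hypotheses across the two constructions — one must be sure that the single open dense locus of ``very general $X$'' lands inside the (intersection of the) very-general loci required by Proposition \ref{propshinder0} and Theorem \ref{thmYS1}. Since all of these are complements of countably many proper closed subsets in the relevant parameter space (the moduli of Lagrangians $A$ with $Y_A^{\geq 3}, Y_{A^\perp}^{\geq 3} \neq \emptyset$, which is irreducible), their common refinement is still very general, so this causes no trouble. I would therefore present the proof as a two-line deduction: apply Proposition \ref{propS20} and the last Proposition of the section to rewrite the Kuznetsov components as $D^b(S)$ and $D^b(T)$, then invoke the main Theorem of Section \ref{sectionGM}.
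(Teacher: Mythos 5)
Your proposal is correct and follows exactly the route the paper takes: the corollary is stated there with no separate proof, being simply the combination of Proposition \ref{propS20} (identifying $S_{20}$ with $S$), the final proposition of Section \ref{SectionFK3} (identifying $Z$ with $T$), and the main theorem of Section \ref{sectionGM} (that $S$ and $T$ are derived and $L$-equivalent but not isomorphic). Your extra remarks on tracking the very-generality hypotheses and on $\mathcal{A}_S = D^b(S)$ for GM surfaces are sensible bookkeeping that the paper leaves implicit.
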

\bibliographystyle{alpha}
\bibliography{references}
\end{document}